\newtheorem{theorem}{Theorem}[section]
\newtheorem{proposition}[theorem]{Proposition}
\newtheorem{corollary}[theorem]{Corollary}
\newtheorem{lemma}[theorem]{Lemma}
\theoremstyle{definition}
\newtheorem{definition}[theorem]{Definition}
\newtheorem{example}[theorem]{Example}
\newtheorem{remark}[theorem]{Remark}
\begin{document}

\title{Reversibility Problem of Multidimensional Finite Cellular Automata}

\keywords{Reversibility, Toeplitz Matrix, Kronecker Sum, Multidimensional Cellular Automata, Null Boundary Condition}

\author{Chih-Hung Chang}
\address[Chih-Hung Chang]{Department of Applied Mathematics, National University of Kaohsiung, Kaohsiung 81148, Taiwan, ROC.}
\email{chchang@nuk.edu.tw}

\author{Jing-Yi Su}
\address[Jing-Yi Su]{Department of Applied Mathematics, National University of Kaohsiung, Kaohsiung 81148, Taiwan, ROC.}

\author{Hasan Ak{\i}n}
\address[Hasan Ak{\i}n]{Ceyhun Atuf Kansu Street, Cankaya-Ankara, Turkey}

\author{Ferhat \c{S}ah}
\address[Ferhat \c{S}ah]{Department of Mathematics, Ad{\i}yaman University, Ad{\i}yaman, Turkey}

\thanks{This work is partially supported by the Ministry of Science and Technology, ROC (Contract No MOST 105-2115-M-390 -001 -MY2).}
\date{April 3, 2017}

\baselineskip=1.5\baselineskip

\begin{abstract}
While the reversibility of multidimensional cellular automata is undecidable and there exists a criterion for determining if a multidimensional linear cellular automaton is reversible, there are only a few results about the reversibility problem of multidimensional linear cellular automata under boundary conditions. This work proposes a criterion for testing the reversibility of a multidimensional linear cellular automaton under null boundary condition and an algorithm for the computation of its reverse, if it exists. The investigation of the dynamical behavior of a multidimensional linear cellular automaton under null boundary condition is equivalent to elucidating the properties of the block Toeplitz matrix. The proposed criterion significantly reduces the computational cost whenever the number of cells or the dimension is large; the discussion can also apply to cellular automata under periodic boundary conditions with a minor modification.
\end{abstract}

\maketitle

\section{Introduction} \label{sec:introduction}

A frequently used technique for studying complex structure is dividing the system into smaller pieces that are elaborated accordingly; for most physical systems, the dynamical behavior usually depends on the interactions among their neighbors. Cellular automaton (CA), introduced by Ulam and von Neumann, is a particular class of discrete dynamical system consisting of a regular network of cells which change their states simultaneously according to the states of their neighbors under a local rule; this makes CA an appropriate approach to model systems with the above mentioned property. CAs has been found applications in simulating or modeling complex systems in diverse areas such as vehicular ad hoc networks modeling, pattern formation, cryptography, image processing and image coding \cite{ALI+-CNSNS2013, CMC+-ITIP2011, HT-ITNN2011, KHP-IATCBB2012, KBT+-JID2013, SUA+-AMM2015, VBT-IJSAC2011}.

\begin{figure}
\begin{center}
\includegraphics[scale=0.45]{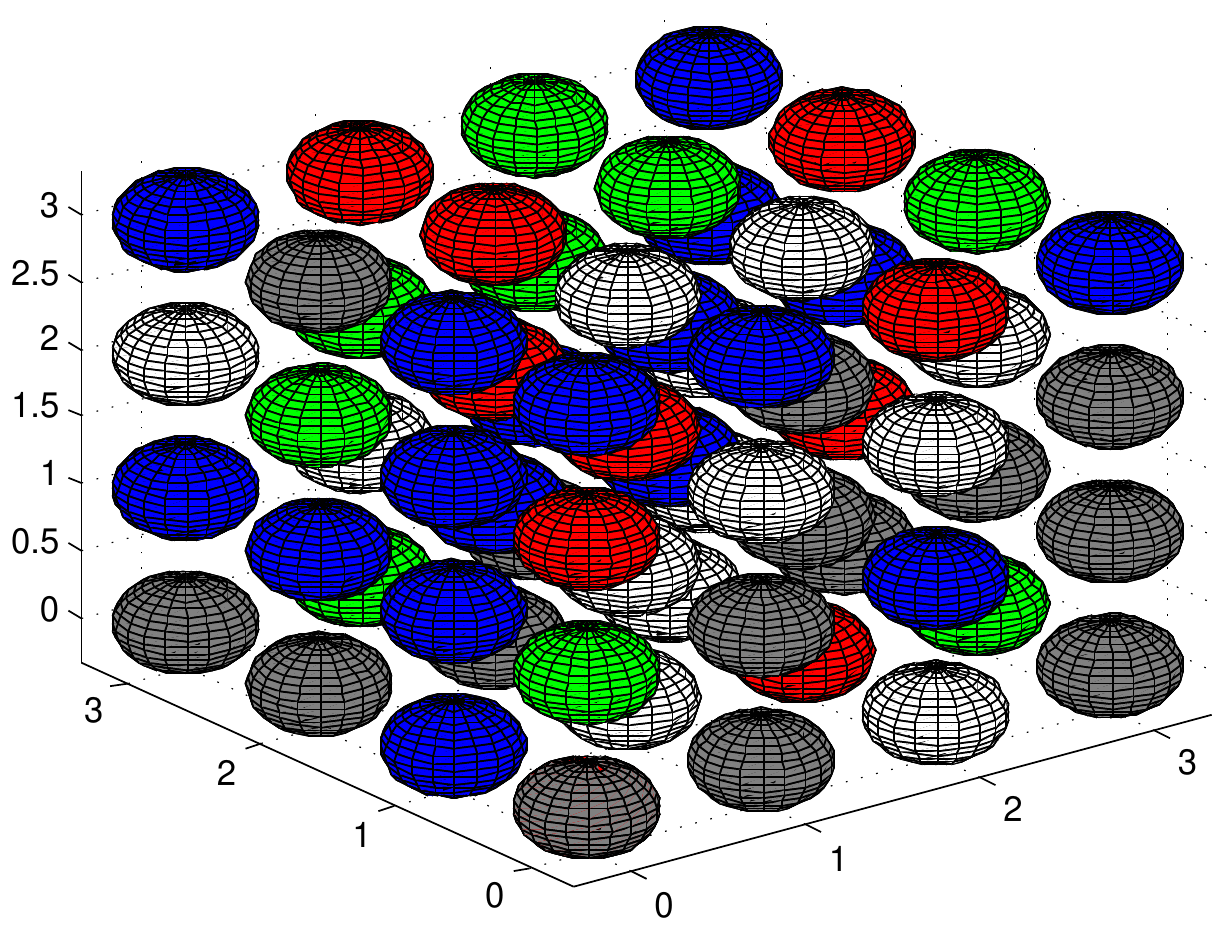} \quad
\includegraphics[scale=0.45]{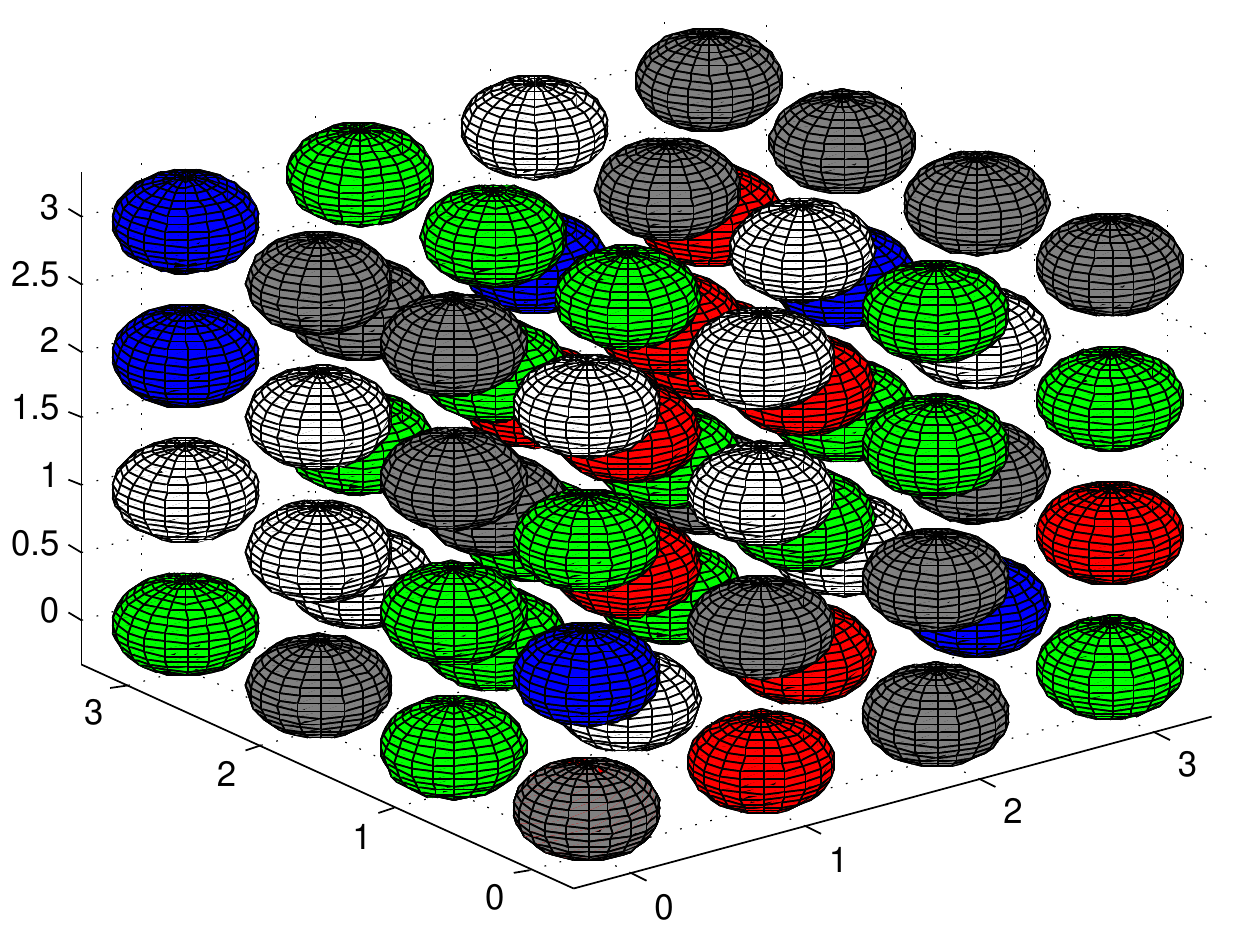} \\
(a) \qquad \qquad \qquad \qquad \qquad \qquad \qquad \qquad (b)
\end{center}
\caption{A three-dimensional linear cellular automaton $\Phi$ which is elucidated in Example \ref{eg:m=n=s=4-p=5-semiJordan}. The $64$ cells are located in a $4 \times 4 \times 4$ cube, and the states $0, 1, 2, 3$, and $4$ are represented by white, red, green, blue, and gray, respectively. The pattern (b) is seen as the $30$th evolution of (a).}
\label{fig:initial-30th-pattern-example-mns4-p5}
\end{figure}

One of the fundamental microscopic properties of nature is physical reversibility, which is a motivation for studying the reversibility problem of a dynamical system \cite{Morit-2012}; reversible computing systems are defined as each of their computational configurations has at most one previous configuration; this makes every computation process can be traced backward uniquely. In other words, reversible computing systems are deterministic in both directions of time. Figure \ref{fig:initial-30th-pattern-example-mns4-p5} illustrates the patterns of an initial pattern and its $30$th evolution of a three-dimensional linear cellular automaton (see Example \ref{eg:m=n=s=4-p=5-semiJordan} for more details). Landauer's principle asserts that an irreversible logical operation, such as erasure of an unnecessary information, inevitably causes heat generation \cite{Landauer-IJRD1961}; Bennett revealed that each irreversible Turing machine can be realized by a reversible one that simulates the former and leaves no garbage information on its tape when it halts \cite{Bennett-IJRD1973}; later on, Toffoli demonstrated that every irreversible $d$-dimensional cellular automaton can be simulated by some $(d+1)$-dimensional reversible cellular automaton \cite{Toffoli-JCSS1977}, and Morita and Harao revealed computation-universality of one-dimensional reversible cellular automata \cite{MH-IT1989}. Recently, it has been shown that a reversibly linear CA is either a Bernoulli automorphism or non-ergodic \cite{CC-IS2016}.

While the reversibility of one-dimensional CAs is elucidated \cite{AP-JCSS1972, Morit-2012, Nasu-TAMS2002}, Kari indicated that the reversibility of multidimensional CAs is generally undecidable \cite{Kari-PD1990, Kari-JCSS1994}. When restricted to linear CAs, however, the reversibility problem of multidimensional systems is demonstrated. More specifically, aside from the necessary and sufficient condition, an explicit formula of the inverse of a multidimensional (reversible) linear CA is given (cf.~\cite{ION-JCSS1983, MM-JCSS1998}). This paper studies the reversibility of linear CAs under boundary conditions and demonstrates an algorithm for computing the inverse, which helps for investigating some classical problems such as the Garden of Eden. For more information about the reversibility problem of CAs, the reader is referred to \cite{Kari-TCS2005, Morit-2012} and the references therein.

Recently, the reversibility problem of CA under boundary conditions has been widely studied since the number of cells is usually finite in practical applications. As the reversibility problem of one-dimensional cellular automata under periodic boundary conditions is generally answered, there are relatively few results about both one-dimensional and multidimensional cellular automata under null boundary conditions (see \cite{CAS-JSP2011, RS-AMC2011, ER-AMC2007a, KSA-TJM2016, NY-JPAG2004, YWX-IS2015} and the references therein); beyond that, investigations about the reversibility problem of cellular automata with memory and $\sigma$-automata are also seen in the literature \cite{TMA+-IS2012, Yamagishi-FFA2013}. Meanwhile, the reversibility problem of cellular automata on Cayley trees under periodic boundary conditions is studied in \cite{CS-2016}.

\begin{figure}
\begin{center}
\includegraphics[scale=0.45]{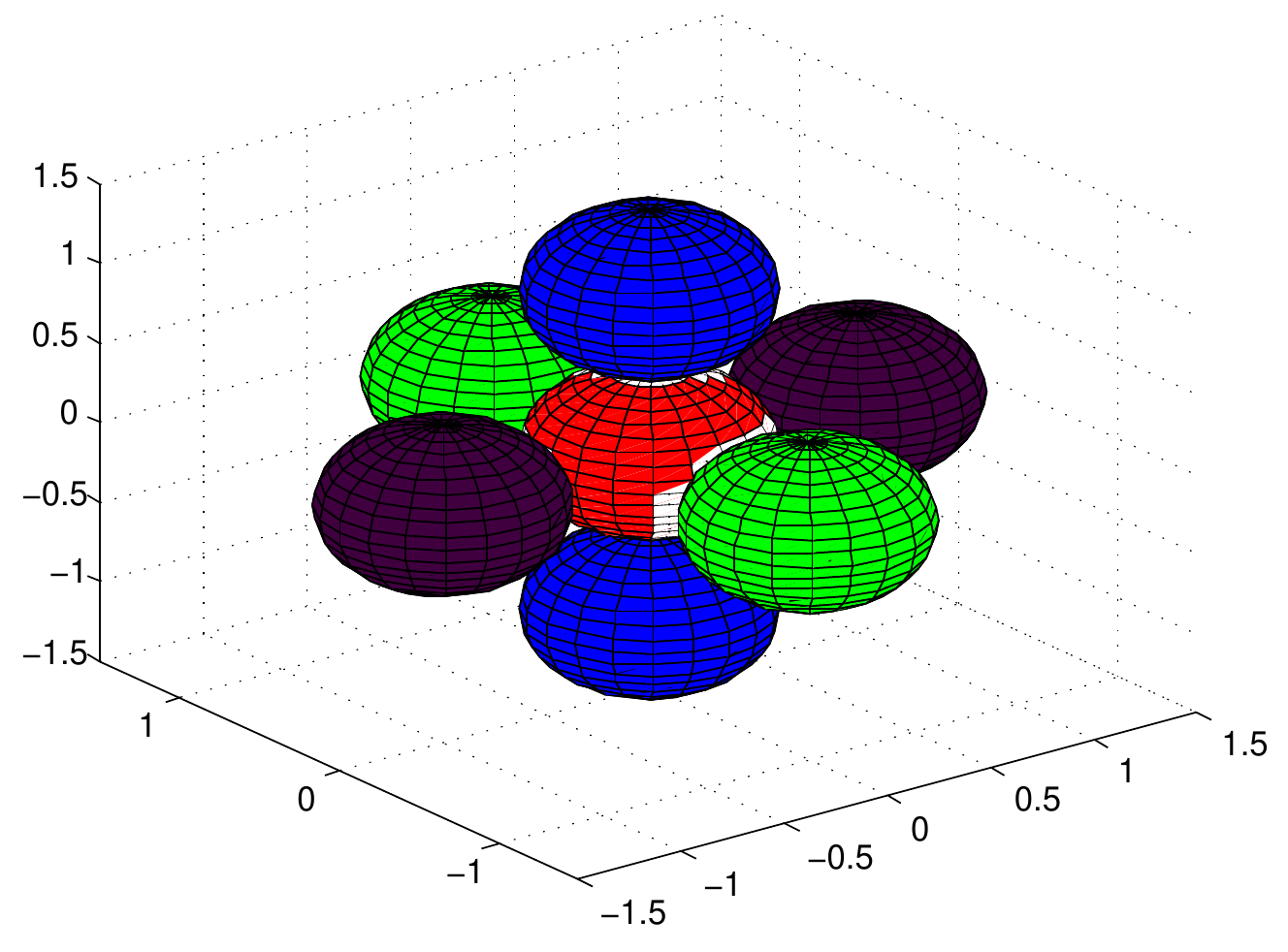} \quad
\includegraphics[scale=0.45]{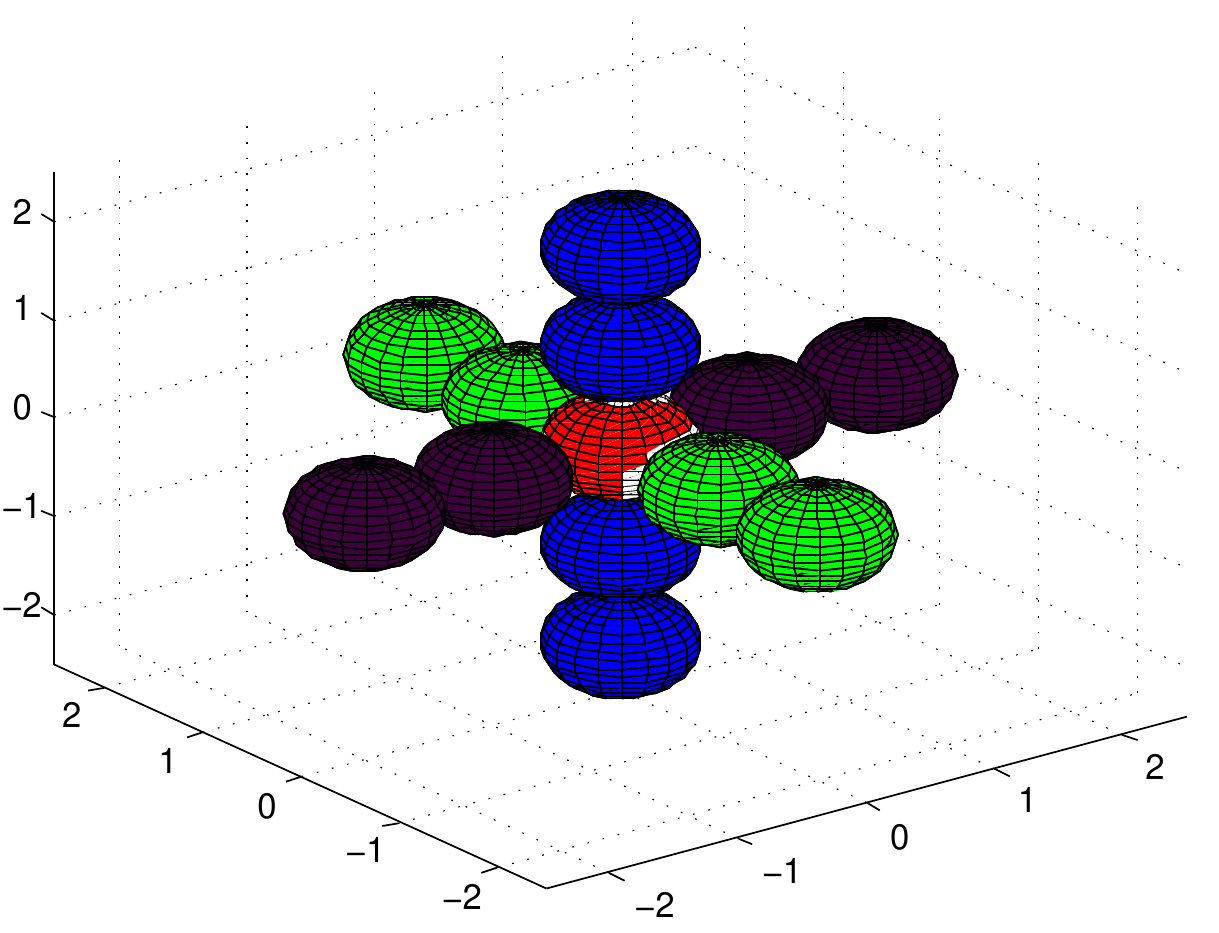}
\end{center}
\caption{The cells treated as the neighbors of centered cells. The left one considers nearest neighborhood, and the right one considers next-nearest neighborhood.}
\label{fig:neighborhood}
\end{figure}

In this paper, we consider the reversibility problem of $d$-dimensional linear cellular automata with the prolonged $\eta$-nearest neighborhood under null boundary conditions for $d \geq 3, \eta \in \mathbb{N}$, and characterize its inverse, if it exists. Figure \ref{fig:neighborhood} illustrates those cells that contribute to the evolution of the centered cells for $\eta = 1$ and $\eta = 2$. After revealing the matrix representation of a $d$-dimensional linear cellular automaton with the prolonged $\eta$-nearest neighborhood, studying the reversibility problem is equivalent to elaborating the invertibility of the corresponding matrix. We show that the associated matrix can be decomposed into the Kronecker sum of several smaller matrices, each of which is a Toeplitz matrix, and an algorithm for its inverse matrix, if it exists, is obtained. The main contributions of this paper are to significantly reduce the computational cost\footnote{Roughly speaking, the computational cost of characterizing the eigenvalues/eigenvectores/determinant of an $n \times n$ matrix is $O(n^3)$, and the computational cost of our approach is $O(n^{\frac{3}{d}})$, where $d$ is the dimension of the considered system.} of determining the reversibility of a multidimensional linear cellular automaton under null boundary condition and its reverse, if it exists, and the dynamical behavior of a multidimensional linear cellular automaton is characterized by the properties of block Toeplitz matrix (cf.~\cite{Gray-2006, GC-2012} for more details about the discussion of block Toeplitz matrix). Additionally, the study can extend to the ones under periodic boundary conditions with a minor modification.

The rest of this investigation is organized as follows. Section \ref{sec:preliminary} recalls definitions and fundamental results in matrix theory and number theory that are used in the later discussion. Sections \ref{sec:3d-ca-square-root} and \ref{sec:3d-ca-general} consider three-dimensional cellular automata with nearest neighborhood, and Section \ref{sec:multidimension-ca} extends the results to general multidimensional cellular automata with the prolonged $\eta$-nearest neighborhood. Conclusions and further discussion are given in Section \ref{sec:conclusion}.

\section{Preliminary} \label{sec:preliminary}

This section devotes to introducing the definition of three-dimensional linear cellular automata with nearest neighborhood over the finite field $\mathbb{Z}_p = \{0, 1, \ldots, p-1\}$ and recalling some well-established theorems in matrix theory and number theory.

Consider a three-dimensional infinite lattice $\mathbb{Z}^3$, divided in regular cells, the state of each cell is taking values from $\mathbb{Z}_p$. At each time step, the state of a cell changes according to a deterministic rule which depends on the state of each cell next to it; this is the so-called cellular automaton. Formally speaking, $\mathbb{Z}^3$ is the set of cells and the set $\mathbb{Z}_p^{\mathbb{Z}^3}$ is called the configuration space. For each $X \in \mathbb{Z}_p^{\mathbb{Z}^3}$ and $\mathbf{i} \in \mathbb{Z}^3$, $X_{\mathbf{i}} \in \mathbb{Z}_p$ refers to the state of $X$ at the cell $\mathbf{i}$.

Let $\{e_1, e_2, e_2\}$ denote the standard basis of $\mathbb{R}^3$; that is,
$$
e_1 = (1, 0, 0), \quad e_2 = (0, 1, 0), \quad \text{and} \quad e_3 = (0, 0, 1).
$$
Denote
$$
\mathcal{N} = \{\mathbf{0}, \pm e_1, \pm e_2, \pm e_3\}, \quad \text{where} \quad \mathbf{0} = (0,0,0).
$$
Fix $a, b, c, d, e, f, c_0 \in \mathbb{Z}_p$, define $\phi: \mathbb{Z}_p^{\mathcal{N}} \to \mathbb{Z}_p$ as
\begin{equation} \label{eq:3d-ca-local-rule}
\phi(y_{\mathcal{N}}) = c_0 y_{\mathbf{0}} + a y_{-e_2} + b y_{e_2} + c y_{-e_1} + d y_{e_1} + e y_{-e_3} + f y_{e_3} \pmod{p}
\end{equation}
A three-dimensional linear cellular automaton driven by the local rule $\phi$ with nearest neighborhood is defined as a pair $(\mathbb{Z}_p^{\mathbb{Z}^3}, \Phi)$, where $\Phi: \mathbb{Z}_p^{\mathbb{Z}^3} \to \mathbb{Z}_p^{\mathbb{Z}^3}$ is given by
\begin{align*}
\Phi(X)_{\mathbf{i}} &= \phi(X_{\mathbf{i}+\mathcal{N}}) \\
 &= c_0 X_{\mathbf{i}} + a X_{\mathbf{i}-e_2} + b X_{\mathbf{i}+e_2} + c X_{\mathbf{i}-e_1} + d X_{\mathbf{i}+e_1} + e X_{\mathbf{i}-e_3} + f X_{\mathbf{i}+e_3} \pmod{p}
\end{align*}
for each $\mathbf{i} \in \mathbb{Z}^3$. Namely, the state $X_{\mathbf{i}}(t+1)$ of cell $\mathbf{i}$ at time step $t+1$ is determined by
$$
X_{\mathbf{i}}(t+1) = \phi(X_{\mathbf{i}+\mathcal{N}}(t)), \quad t \in \mathbb{Z}^+.
$$
A cellular automaton under null boundary condition is one such that only finitely many cells are associated with nonzero state. More explicitly, for $n, s, m \in \mathbb{N}$, $n, s, m \geq 2$, a three-dimensional linear cellular automaton under null boundary condition is described as
\begin{align*}
X_{\mathbf{i}}(t+1) &= \Phi_N(X(t))_{\mathbf{i}} \\
 &= c_0 X_{\mathbf{i}}(t) + a(\mathbf{i}) X_{\mathbf{i}-e_2}(t) + b(\mathbf{i}) X_{\mathbf{i}+e_2}(t) + c(\mathbf{i}) X_{\mathbf{i}-e_1}(t) \\
 &\qquad + d(\mathbf{i}) X_{\mathbf{i}+e_1}(t) + e(\mathbf{i}) X_{\mathbf{i}-e_3}(t) + f(\mathbf{i}) X_{\mathbf{i}+e_3}(t) \pmod{p}
\end{align*}
where
\begin{align*}
c(\mathbf{i}) &= \left\{
\begin{aligned}
&c, && i_1 \geq 2; \\
&0, && i_1 = 1;
\end{aligned}\right. 
& a(\mathbf{i}) &= \left\{
\begin{aligned}
&a, && i_2 \geq 2; \\
&0, && i_2 = 1;
\end{aligned}\right. 
& e(\mathbf{i}) &= \left\{
\begin{aligned}
&e, && i_3 \geq 2; \\
&0, && i_3 = 1;
\end{aligned}\right. \\
d(\mathbf{i}) &= \left\{
\begin{aligned}
&b, && i_1 \leq n-1; \\
&0, && i_1 = n;
\end{aligned}\right. 
& b(\mathbf{i}) &= \left\{
\begin{aligned}
&d, && i_2 \leq s-1; \\
&0, && i_2 = s;
\end{aligned}\right. 
& f(\mathbf{i}) &= \left\{
\begin{aligned}
&f, && i_3 \leq m-1; \\
&0, && i_3 = m;
\end{aligned}\right. \quad
\end{align*}
and $\mathbf{i} = (i_1, i_2, i_3)$. That is, $\Phi_N: \mathbb{Z}^{n \times s \times m} \to \mathbb{Z}^{n \times s \times m}$ illustrates the evolution of each cell in an $n \times s \times m$ cuboid.

It is well-known that elaborating the behavior of a linear dynamical system is related to studying its corresponding matrix representation. Before characterizing the matrix representation of $\Phi_N$, which is postponed to the following section, we recall some definitions and results in matrix theory first.

\begin{definition}
Let $A \in \mathcal{M}_{j_1 \times j_2}(\mathbb{R})$ be a $j_1 \times j_2$ real matrix and $B \in \mathcal{M}_{k_1 \times k_2}(\mathbb{R})$ a $k_1 \times k_2$ real matrix. The \textbf{Kronecker product} (or \textbf{tensor product}) of $A$ and $B$ is the $j_1 k_1 \times j_2 k_2$ matrix
$$
A \otimes B = \begin{pmatrix}
a_{11} B & a_{12} B & \cdots & a_{1 j_2} B \\
\vdots & \vdots & & \vdots \\
a_{j_1 1} B & a_{j_1 2} B & \cdots & a_{j_1 j_2} B
\end{pmatrix}_{j_1 k_1 \times j_2 k_2}.
$$
\end{definition}

Kronecker products have many interesting properties. For example,
$$
(A \otimes B) (C \otimes D) = (A C) \otimes (B D)
$$
and
$$
\mathrm{rank}(A \otimes B) = \mathrm{rank}(A) \mathrm{rank}(B)
$$
for all matrices $A, B, C$, and $D$ provided that the products $AC$ and $BD$ are both well-defined.

Let $A$ be a $j \times j$ matrix and $B$ a $k \times k$ matrix. Denote $I_r$ the $r \times r$ identity matrix; the \textbf{Kronecker sum} (or \textbf{tensor sum}) of $A$ and $B$ is the $jk \times jk$ matrix $(I_k \otimes A) + (B \otimes I_j)$. The Kronecker product and Kronecker sum of matrices over finite fields are defined in the same way. From the definitions of Kronecker product and Kronecker sum, a straightforward examination reveals that the eigenvalues and eigenvectors of the Kronecker sum of $A$ and $B$ are completely illustrated by the eigenvalues and eigenvectors of $A$ and $B$, as follows.

Suppose that $\mu_1, \mu_2, \ldots, \mu_j$ are the eigenvalues of $A$ and $\nu_1, \nu_2, \ldots, \nu_k$ are the eigenvalues of $B$. Then $\{\mu_i + \nu_r\}_{1 \leq i \leq j, 1 \leq r \leq k}$ is the set of eigenvalues of $(I_k \otimes A) + (B \otimes I_j)$. Furthermore, if $\mathbf{x}$ is an eigenvector of $A$ corresponding to the eigenvalue $\mu$ and $\mathbf{y}$ is an eigenvector of $B$ corresponding to the eigenvalue $\nu$, then $\mathbf{y} \otimes \mathbf{x}$ is an eigenvector of $(I_k \otimes A) + (B \otimes I_j)$ corresponding to the eigenvalue $\mu + \nu$. These results hold for matrices either over $\mathbb{R}$ or finite field $\mathbb{F}_p$; the proofs of finite field case are analogous to the original ones, hence are omitted for the compactness of this paper. The reader is referred to \cite{HJ-1994,Orteg-1987} for more details.

It is known that a given matrix $A$ is reversible if and only if $\det A \neq 0$, and $\det A$ is the product of its eigenvalues. Whenever $A$ is decomposed into the Kronecker sum of small matrices $B$ and $C$, it follows that $A$ is reversible only if either $B$ or $C$ is reversible. To reveal the necessary and sufficient condition of $A$ being reversible, we aim to characterize the eigenvalues of $B$ and $C$ completely. Completely characterizing the eigenvalues of $B$ and $C$ not only reduces the computational cost, but helps in determining the inverse matrix of $A$, if it exists. The related discussion is addressed later.

Next, we recall some results in number theory that will be used later for the investigation of eigenvalues. Suppose that $\mathbb{F}_p$ is a finite field with characteristic $p$ and $F(x)$ is a polynomial over $\mathbb{F}_p$. Let $\alpha$ be a root of $F(x)$; the \textbf{multiplicity} of $\alpha$ is the largest positive integer $n$ for which $(x-\alpha)^n$ divides $F(x)$. $\alpha$ is a \textbf{simple root} if $n = 1$ and is a \textbf{multiple root} otherwise.

\begin{definition}
Suppose that $F(x) \in \mathbb{F}_p [x]$ is irreducible. We say that $F(x)$ is \textbf{separable} if it has no multiple roots in any extension of $\mathbb{F}_p$. An irreducible polynomial that is not separable is \textbf{inseparable}.
\end{definition}

\begin{proposition}[See \cite{Rom-2006}]\label{prop:irreducible-polynomial-only-simple-roots}
All irreducible polynomials over $\mathbb{F}_p$ are separable.
\end{proposition}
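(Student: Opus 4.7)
The plan is to reduce separability to a statement about the formal derivative and then exploit that $\mathbb{F}_p$ is a perfect field via the Frobenius map. Recall that an element $\alpha$ of some extension $K/\mathbb{F}_p$ is a multiple root of $F(x)$ exactly when $(x-\alpha)$ divides both $F(x)$ and its formal derivative $F'(x)$ in $K[x]$. Equivalently, $F$ has some multiple root in an algebraic closure if and only if $\gcd(F, F') \neq 1$ in $\mathbb{F}_p[x]$ (the gcd commutes with field extensions). The goal, therefore, is to rule out $\gcd(F, F') \neq 1$ when $F$ is irreducible.

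First I would observe that $\gcd(F, F')$ divides $F$, and since $F$ is irreducible, the gcd is, up to a unit, either $1$ or $F$ itself. Because $\deg F' < \deg F$, the only way the gcd can equal $F$ is if $F \mid F'$ forces $F' \equiv 0$. So the entire problem collapses to proving that an irreducible $F \in \mathbb{F}_p[x]$ cannot have $F'(x) = 0$.

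Next I would analyze what $F' = 0$ means in characteristic $p$. Writing $F(x) = \sum_i a_i x^i$, the condition $F'(x) = \sum_i i\, a_i x^{i-1} = 0$ forces $a_i = 0$ whenever $p \nmid i$. Therefore every nonzero term of $F$ has exponent divisible by $p$, so $F(x) = G(x^p)$ for some $G(y) = \sum_j a_{pj} y^j \in \mathbb{F}_p[y]$.

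The main (and really the only) step is the Frobenius pullback. Using Fermat's little theorem, each coefficient satisfies $a_{pj}^p = a_{pj}$; combining this with the characteristic-$p$ identity $(u+v)^p = u^p + v^p$ applied repeatedly, I would compute
\[
F(x) = G(x^p) = \sum_j a_{pj} x^{pj} = \sum_j a_{pj}^p \bigl(x^j\bigr)^p = \Bigl(\sum_j a_{pj} x^j\Bigr)^p = G(x)^p.
\]
But $\deg G < \deg F$, so $F = G^p$ is a nontrivial factorization, contradicting the irreducibility of $F$. Hence $F' \neq 0$, so $\gcd(F, F') = 1$, and $F$ has only simple roots in every extension of $\mathbb{F}_p$. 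The crux of the argument is precisely the Frobenius identity, which is exactly the feature that makes $\mathbb{F}_p$ perfect; no other step is subtle.
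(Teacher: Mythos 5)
Your proof is correct and is the standard argument: reduce separability to $\gcd(F,F')=1$, use irreducibility and degree counting to force $F'=0$ in the bad case, and then use the Frobenius identity together with $a^p=a$ on $\mathbb{F}_p$ to write $F=G^p$, contradicting irreducibility. The paper itself supplies no proof for this proposition --- it is quoted from the cited reference \cite{Rom-2006} --- and your argument is precisely the one found there, so there is nothing to compare beyond noting that you have correctly filled in the omitted details.
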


When the degree of $F(x)$ is $2$, it is seen that $x^2 - u$ is irreducible if and only if $u \notin \mathbb{F}_p^2$ (\cite{MM-2007}). A field $\mathbb{E}$ is a finite extension of $\mathbb{F}_p$ if $\mathbb{F}_p \subseteq \mathbb{E}$ and $\mathbb{E}$ is a finite dimensional vector space over $\mathbb{F}_p$. If $F(x) \in \mathbb{F}_p[x]$ factors into linear factors
$$
F(x) = \alpha_0 (x - \alpha_1) (x - \alpha_2) \cdots (x - \alpha_k)
$$
in an extension field $\mathbb{E}$, we say that $F(x)$ \textbf{splits} in $\mathbb{E}$.

\begin{definition}
Let $\mathcal{F} = \{F_i(x): i \in I\} \subseteq \mathbb{F}_p[x]$ be a family of polynomials. A \textbf{splitting field} for $\mathcal{F}$ is an extension field $\mathbb{E}$ of $\mathbb{F}_p$ satisfying:
\begin{enumerate}[\bf 1)]
\item Each $F_i(x)$ splits over $\mathbb{E}$;
\item $\mathbb{E}$ is the smallest field that contains the roots of each $F_i(x)$.
\end{enumerate}
\end{definition}

The next theorem describes an important property of irreducible polynomials over $\mathbb{F}_p$.

\begin{theorem}[See \cite{MM-2007}]
If $F(x) \in \mathbb{F}_p[x]$ is irreducible of degree $k$, then $F$ has a root $\alpha$ in an extension field $\mathbb{E}$ of $\mathbb{F}_p$ with cardinality $p^k$. Moreover, all the roots of $F(x)$ are simple and are given by $\alpha, \alpha^p, \ldots, \alpha^{p^{k-1}}$.
\end{theorem}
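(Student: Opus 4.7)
The plan is to construct the required extension field explicitly as a quotient ring and then use the Frobenius endomorphism to exhibit the full list of roots.

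First I would build the field $\mathbb{E}$ by setting $\mathbb{E} = \mathbb{F}_p[x]/(F(x))$. Since $F(x)$ is irreducible of degree $k$ over $\mathbb{F}_p$, the principal ideal $(F(x))$ is maximal, hence $\mathbb{E}$ is a field. A standard division-with-remainder argument shows that $\{1, x, x^2, \ldots, x^{k-1}\}$ (read modulo $F(x)$) is an $\mathbb{F}_p$-basis of $\mathbb{E}$, so $|\mathbb{E}| = p^k$. Letting $\alpha$ denote the image of $x$ in $\mathbb{E}$, one has $F(\alpha) = 0$ by construction.

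Next I would invoke Proposition \ref{prop:irreducible-polynomial-only-simple-roots}: since $F(x)$ is irreducible over $\mathbb{F}_p$, it is separable, so all its roots (in any extension, and in particular in $\mathbb{E}$) are simple. To produce the remaining roots I would apply the Frobenius map $\sigma:\mathbb{E}\to\mathbb{E}$, $\sigma(a)=a^p$. Writing $F(x) = \sum_{i=0}^{k} c_i x^i$ with $c_i \in \mathbb{F}_p$, and using that $(u+v)^p = u^p + v^p$ in characteristic $p$ together with Fermat's little theorem $c_i^p = c_i$, one gets
$$
F(\alpha^{p^j}) = \sum_{i=0}^{k} c_i \alpha^{i p^j} = \Bigl(\sum_{i=0}^{k} c_i \alpha^{i}\Bigr)^{p^j} = F(\alpha)^{p^j} = 0
$$
for every $j \geq 0$. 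Thus $\alpha, \alpha^p, \alpha^{p^2}, \ldots, \alpha^{p^{k-1}}$ are all roots of $F(x)$.

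The main obstacle is showing these $k$ elements are pairwise distinct, because once distinctness is established, $F(x)$ being of degree $k$ forces this list to exhaust the roots. Suppose for contradiction $\alpha^{p^i} = \alpha^{p^j}$ for some $0 \leq i < j \leq k-1$; applying $\sigma^{-i}$ (which is well defined since $\sigma$ is an automorphism of $\mathbb{E}$) yields $\alpha^{p^{j-i}} = \alpha$, so $\alpha$ lies in the fixed set $\mathbb{F} = \{a \in \mathbb{E}: a^{p^{j-i}} = a\}$. This set is the root set of $x^{p^{j-i}} - x$, hence has cardinality at most $p^{j-i}$, and one checks it is a subfield of $\mathbb{E}$ containing $\mathbb{F}_p$ and $\alpha$. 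Since $F(x)$ is the minimal polynomial of $\alpha$ over $\mathbb{F}_p$, the subfield $\mathbb{F}_p(\alpha)\subseteq \mathbb{F}$ has size $p^k$, forcing $p^k \leq p^{j-i}$ and so $j - i \geq k$, contradicting $j - i \leq k - 1$. Thus the $k$ elements are distinct, and since $\deg F = k$ they constitute all the roots of $F(x)$, completing the proof.
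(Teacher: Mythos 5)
Your proof is correct and complete. The paper itself offers no proof of this statement (it is quoted from the cited reference), and your argument --- realizing $\mathbb{E}$ as $\mathbb{F}_p[x]/(F(x))$, generating the roots as the Frobenius orbit $\alpha, \alpha^p, \ldots, \alpha^{p^{k-1}}$, and proving their distinctness by comparing $\mathbb{F}_p(\alpha)$ with the fixed field of $\sigma^{j-i}$ --- is precisely the standard one, with each step (maximality of $(F(x))$, the Frobenius computation, the cardinality bound on the root set of $x^{p^{j-i}}-x$) justified.
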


For more details, the reader is referred to \cite{MM-2007,Rom-2006}.

\section{Three-Dimensional Cellular Automata: Reciprocal Cases} \label{sec:3d-ca-square-root}

Beyond determining whether a multidimensional linear cellular automaton under the null boundary condition is reversible, we aim to propose a method to find the reverse of an invertible cellular automaton. The rough idea is as follows.
\begin{enumerate}[(i)]
\item Decompose the matrix representation of the cellular automaton into the Kronecker sum of several components.
\item Find the Jordan normal forms of these smaller matrices, respectively.
\item The reversibility of the original system comes from the combination of eigenvalues of these components, so does its inverse.
\end{enumerate}

In the following two sections, we introduce the matrix representation of the three-dimensional linear cellular automata $\Phi_N$ over $\mathbb{Z}_p$ under null boundary conditions with local rule $\phi$ defined in \eqref{eq:3d-ca-local-rule} and elaborate the reversibility of $\Phi_N$ via its matrix representation. We start the discussion with the three dimensional since there is something interesting which might not be seen in dimension two. More precisely, we can have two eigenvalues that sum to the negation of a third one, which leads to the irreversibility of the elaborated system.

This section considers the case where the evolution at each cell is independent of its current state (i.e., $c_0 = 0$) and each pair of parameters in every direction satisfies the \textbf{quadratic reciprocity law} (defined later). The general cases are elucidated in Section \ref{sec:3d-ca-general}.

Fix $n, s, m \geq 2$, define
$$
T_{RN} = \begin{pmatrix}
  M_s & e I_{ns} & O_{ns} & \cdots & O_{ns} & O_{ns} \\
  f I_{ns} & M_s & e I_{ns} & \cdots & O_{ns} & O_{ns} \\
  O_{ns} & f I_{ns} & M_s & \cdots & O_{ns} & O_{ns} \\
  \vdots & \vdots & \vdots & \ddots & \vdots & \vdots \\
  O_{ns} & O_{ns} & O_{ns} & \cdots & M_s & e I_{ns} \\
  O_{ns} & O_{ns} & O_{ns} & \cdots & f I_{ns} & M_s
\end{pmatrix}_{nsm \times nsm},
$$
where
$$
M_s = \begin{pmatrix}
  S_n(c, d) & b I_n & O_n & \cdots & O_n & O_n \\
  a I_n & S_n(c, d) & b I_n & \cdots & O_n & O_n \\
  O_n & a I_n & S_n(c, d) & \cdots & O_n & O_n \\
  \vdots & \vdots & \vdots & \ddots & \vdots & \vdots \\
  O_n & O_n & O_n & \cdots & S_n(c, d) & b I_n \\
  O_n & O_n & O_n & \cdots & a I_n & S_n(c, d)
\end{pmatrix}_{ns \times ns}
$$
with
$$
S_n(c, d) = \begin{pmatrix}
  0 & d & 0 & \cdots & 0 & 0 \\
  c & 0 & d & \cdots & 0 & 0 \\
  0 & c & 0 & \cdots & 0 & 0 \\
  \vdots & \vdots & \vdots & \ddots & \vdots & \vdots \\
  0 & 0 & 0 & \cdots & 0 & d \\
  0 & 0 & 0 & \cdots & c & 0
\end{pmatrix}_{n \times n};
$$
herein, $O_k$ refers to the $k \times k$ zero matrix. We remark that $M_s$ is the Kronecker sum of $S_n(c, d)$ and $S_s(a, b)$, and $T_{RN}$ is the Kronecker sum of $M_s$ and $S_m(f, e)$. Let $\Theta: \mathbb{Z}_p^{n \times s \times m} \to \mathbb{Z}_p^{nsm}$ be the transformation that designates $X = (X_{\mathbf{i}})_{1 \leq i_1 \leq n, 1 \leq i_2 \leq s, 1 \leq i_3 \leq m}$ as a column vector with respect to the anti-lexicographic order, where $\mathbf{i} = (i_1, i_2, i_3)$. For example, if $n = s = m = 2$, then
$$
\Theta(X) = (X_{111}, X_{211}, X_{121}, X_{221}, X_{112}, X_{212}, X_{122}, X_{222})',
$$
where $v'$ denotes the transpose of $v$. Notably, $\Theta$ is a one-to-one correspondence. The following theorem indicates that $T_{RN}$ is the matrix representation of the cellular automaton $\Phi_N$ with local rule $\phi$ defined in \eqref{eq:3d-ca-local-rule} and $c_0 = 0$.

\begin{theorem} \label{thm:3d-matrix-representation-commute-diagram}
The three-dimensional linear cellular automaton $\Phi_N$ over $\mathbb{Z}_p$ under null boundary condition is characterized by $T_{RN}$, and vice versa. More explicitly, the diagram
$$
\xymatrix{
\mathbb{Z}_p^{n \times s \times m} \ar[rr]^{\Phi_N} \ar[d]_{\Theta} && \mathbb{Z}_p^{n \times s \times m} \ar[d]^{\Theta} \\
\mathbb{Z}_p^{nsm} \ar[rr]_{\mathbf{T}} && \mathbb{Z}_p^{nsm}}
$$
commutes, where $\mathbf{T}(y) = T_{RN} y \pmod{p}$ for each $y \in \mathbb{Z}^{nsm}$.
\end{theorem}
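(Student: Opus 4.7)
The plan is a direct componentwise verification, made transparent by the three-level Kronecker sum structure $T_{RN} = I_m \otimes M_s + S_m(f,e) \otimes I_{ns}$ with $M_s = I_s \otimes S_n(c,d) + S_s(a,b) \otimes I_n$. Fixing a configuration $X$ and a cell $\mathbf{i} = (i_1,i_2,i_3)$, I would read off the entry of $\mathbf{T}(\Theta(X))$ in the slot assigned to $\mathbf{i}$ and recognize it as $\Phi_N(X)_{\mathbf{i}}$. The anti-lexicographic order sends $\mathbf{i}$ to position $i_1 + (i_2-1)n + (i_3-1)ns$, so $\Theta(X)$ naturally decomposes into $m$ outer blocks of length $ns$ indexed by $i_3$, each split into $s$ sub-blocks of length $n$ indexed by $i_2$, with $i_1$ running within each sub-block. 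This identification aligns the outer block structure of $T_{RN}$ with the $e_3$-direction, the block structure of $M_s$ with the $e_2$-direction, and $S_n(c,d)$ with the $e_1$-direction.

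I would then peel off the three layers. The block-tridiagonal form of $T_{RN}$ expresses the $i_3$-th outer block of $T_{RN}\Theta(X)$ as a linear combination of the $(i_3-1)$-th, $i_3$-th, and $(i_3+1)$-th outer blocks of $\Theta(X)$, with the off-diagonal scalars contributing the $\pm e_3$ neighbor values. Recursing one level, the $i_2$-th sub-block of $M_s$ acting on a single outer block contributes the $\pm e_2$ neighbor values (with coefficients $a$ and $b$) together with a local action of $S_n(c,d)$; finally, the $i_1$-th entry of $S_n(c,d)$ applied to a column of length $n$ yields $c X_{\mathbf{i}-e_1} + d X_{\mathbf{i}+e_1}$. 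When an index $i_k$ sits at the boundary of the cuboid, the corresponding off-diagonal block at the relevant level of $T_{RN}$ is simply absent (there is no preceding/succeeding block to multiply), so the associated neighbor term drops out; this matches precisely the indicator definitions of $a(\mathbf{i}),\ldots,f(\mathbf{i})$. Since $c_0=0$ throughout this section and the diagonal of $S_n(c,d)$ is zero, no $c_0 X_{\mathbf{i}}$ term is produced, and the assembled six-term sum coincides with $\Phi_N(X)_{\mathbf{i}}$ modulo $p$. The converse direction follows from the bijectivity of $\Theta$, which gives the matrix uniquely from the map.

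No deep idea is required; the only obstacle is the bookkeeping, namely keeping the three nested layers of block structure in bijection with the three coordinate directions and confirming that the null boundary condition is captured correctly by the absence of off-diagonal blocks at the outermost rows and columns of each level. Before writing the general argument, I would sanity-check the indexing on a small case (for instance $n=s=m=2$, using the explicit vector $(X_{111},X_{211},X_{121},X_{221},X_{112},X_{212},X_{122},X_{222})'$ already displayed in the paper) to lock down the pairing between block positions and neighbor offsets in each of the three directions.
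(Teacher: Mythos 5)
Your proposal is a correct and careful write-up of exactly the direct componentwise verification that the paper itself declares ``straightforward'' and omits: the same identification of the anti-lexicographic index $i_1+(i_2-1)n+(i_3-1)ns$ with the nested block structure of $T_{RN}$, and the same observation that the null boundary condition is encoded by the absent off-diagonal blocks at the extremal block rows. Nothing is missing; your plan to sanity-check the coefficient placement on the $n=s=m=2$ case is prudent given the paper's slightly inconsistent labeling of the pairs $(a,b)$, $(c,d)$, $(e,f)$ across the local rule and the matrices $S_j(\cdot,\cdot)$.
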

\begin{proof}
The verification is straightforward, thus it is omitted.
\end{proof}

Theorem \ref{thm:3d-matrix-representation-commute-diagram} reveals that $\Phi_N$ is reversible if and only if its matrix representation $T_{RN}$ is invertible over $\mathbb{Z}_p$; this initiates the study of the eigenvalues of $T_{RN}$.

For $j \geq 0$, define
\begin{equation}\label{eq:charpoly-for-Kj}
g_j(x) = \sum\limits_{i=0}^{[j/2]} (-1)^i {{j-i}\choose{i}} x^{j-2i},
\end{equation}
herein $[\cdot]$ is the floor function.

For any collection of sets $\{S_i\}_{i=1}^k$, let $S_1 + S_2 + \cdots + S_k$ denote the Minkowski sum of sets; that is,
$$
S_1 + S_2 + \cdots + S_k = \{s_1 + s_2 + \cdots + s_k: s_i \in S_i \text{ for } 1 \leq i \leq k\}.
$$
Furthermore, we refer to $S_r(1,1)$ as $K_r$ for the sake of simplicity. Theorem \ref{thm:eigenvalue-set-decomposition} characterizes all the eigenvalues of $T_{RN}$ completely. Herein, we consider the general cases (i.e., real coefficients) first, the case of coefficients in $\mathbb{Z}_p$ is elaborated later on.

We remark that the discussion of eigenvalues of $T_{RN}$ over the extension field of $\mathbb{Z}_p$ is analogous to the real matrix case (over complex numbers). The reader is referred to Examples \ref{eg:m=n=s=4-p=3}, \ref{eg:m=n=s=2-p-not3}, and \ref{eg:m=n=s=4-p=5-semiJordan} for more details. Furthermore, the investigation can extend to cellular automata defined on an arbitrary field $\mathbb{F}$.

\begin{theorem}\label{thm:eigenvalue-set-decomposition}
Suppose that $a, b, c, d, e, f \in \mathbb{R}^+$. The set of eigenvalues $\mathbf{E}_T$ of $T_{RN}$ is
$$
\mathbf{E}_T = \alpha_{a, b} \mathbf{R}_s + \alpha_{c, d} \mathbf{R}_n + \alpha_{e, f} \mathbf{R}_m,
$$
where $\mathbf{R}_j$ denotes the set of roots of $g_j$ over $\mathbb{C}$ and $\alpha_{t_1, t_2}$ denotes the geometric mean of $t_1, t_2$ in $\mathbb{R}$ provided that it is well-defined.
\end{theorem}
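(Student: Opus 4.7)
The plan is to exploit the Kronecker-sum decomposition of $T_{RN}$ that was established just before the theorem, and then separately identify the spectrum of each tridiagonal factor $S_j(\cdot,\cdot)$ with a scaled version of the spectrum of $K_j = S_j(1,1)$, whose characteristic polynomial is $g_j$. Once these two ingredients are in place, the formula for $\mathbf{E}_T$ is immediate from the eigenvalue rule for Kronecker sums that is recalled in the preliminaries.

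First I would make the reduction explicit. Since $T_{RN} = (I_m \otimes M_s) + (S_m(f,e) \otimes I_{ns})$ and $M_s = (I_s \otimes S_n(c,d)) + (S_s(a,b) \otimes I_n)$, two applications of the Kronecker-sum eigenvalue rule give
\begin{equation*}
\mathbf{E}_T \;=\; \operatorname{Spec}(S_n(c,d)) \,+\, \operatorname{Spec}(S_s(a,b)) \,+\, \operatorname{Spec}(S_m(f,e)),
\end{equation*}
as a Minkowski sum in $\mathbb{C}$. Thus the theorem reduces to showing that
\begin{equation*}
\operatorname{Spec}(S_j(p,q)) \;=\; \alpha_{p,q}\,\mathbf{R}_j
\end{equation*}
for each relevant $(j,p,q)$.

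Next I would establish a diagonal similarity that rescales the sub- and super-diagonals to a common value. Setting $r=\sqrt{p/q}$ and $D=\operatorname{diag}(1,r,r^{2},\dots,r^{\,j-1})$, a direct entrywise check gives $D^{-1} S_j(p,q)\, D = \alpha_{p,q} K_j$, where $\alpha_{p,q}=\sqrt{pq}$ is the geometric mean (well-defined since $p,q\in\mathbb{R}^+$). Hence $\operatorname{Spec}(S_j(p,q)) = \alpha_{p,q}\,\operatorname{Spec}(K_j)$. The last remaining task is to identify $\operatorname{Spec}(K_j)$ with $\mathbf{R}_j$, which amounts to proving that $g_j(x)$ is (up to sign) the characteristic polynomial of $K_j$. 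For this I would expand $\det(xI_j-K_j)$ along its last row, obtaining the three-term recurrence $p_j(x)=x\,p_{j-1}(x)-p_{j-2}(x)$ with $p_0=1$ and $p_1(x)=x$, and then verify by a short Pascal-type identity on the binomial coefficients $\binom{j-i}{i}$ that the closed-form \eqref{eq:charpoly-for-Kj} satisfies the same recurrence and initial data. (This is the classical Chebyshev-polynomial computation for a Jacobi matrix with zero diagonal and unit off-diagonals.)

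The main obstacle is almost entirely notational rather than conceptual: one must be careful that the Kronecker-sum eigenvalue rule recalled in the preliminaries is applied in the correct order of factors (so that the three summands really do run independently over $\mathbf{R}_n$, $\mathbf{R}_s$, $\mathbf{R}_m$) and that the diagonal similarity $D$ is still legitimate when $p\neq q$ but $pq>0$, which is exactly the hypothesis $a,b,c,d,e,f\in\mathbb{R}^+$ built into the statement. Once we move over $\mathbb{Z}_p$ later, this is precisely the point at which the hypothesis must be promoted to the existence of square roots in $\mathbb{F}_p$ (the ``quadratic reciprocity'' condition alluded to in the section heading), and correspondingly the eigenvectors $\mathbf{y}\otimes\mathbf{x}$ guaranteed by the Kronecker-sum discussion will have coordinates in the splitting field introduced in Section~\ref{sec:preliminary}; but for the present real-coefficient statement no such subtlety intervenes, and the proof is complete once the three reductions above are assembled.
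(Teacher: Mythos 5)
Your proposal is correct and follows essentially the same route as the paper: a diagonal similarity turning each tridiagonal factor $S_j(p,q)$ into $\alpha_{p,q}K_j$, the Kronecker-sum eigenvalue rule from the preliminaries to obtain the Minkowski-sum formula, and the three-term recurrence $g_j(x)=xg_{j-1}(x)-g_{j-2}(x)$ to identify $\operatorname{Spec}(K_j)$ with $\mathbf{R}_j$. The only cosmetic differences are that the paper performs the diagonal conjugation at the block level before invoking the Kronecker-sum rule and extracts the closed form of $g_j$ from a generating function rather than verifying it against the recurrence directly.
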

\begin{proof}
Suppose that $t_1, t_2 \in \mathbb{R}^+$ are positive real numbers, and $A$ is an $r \times r$ real matrix. Given a $qr \times qr$ matrix
$$
\widehat{A} = \begin{pmatrix}
A & t_1 I_r & O_r & O_r & \cdots & O_r \\
t_2 I_r & A & t_1 I_r & O_r & \cdots & O_r \\
O_r & t_2 I_r & A & t_1 I_r & \cdots & O_r \\
\vdots & \vdots & \ddots & \ddots & \ddots & \vdots \\
O_r & O_r & \cdots & t_2 I_r & A & t_1 I_r \\
O_r & O_r & \cdots & O_r & t_2 I_r & A
\end{pmatrix}.
$$
Let $P = \mathrm{diag}(\alpha_{t_1^{-1},t_2} I_r, \alpha_{t_1^{-1},t_2}^2 I_r, \cdots, \alpha_{t_1^{-1},t_2}^q I_r)$ be a $qr \times qr$ invertible matrix with diagonal block $\alpha_{t_1^{-1},t_2}^i I_r$ for $1 \leq i \leq q$. A straightforward examination infers that
$$
P^{-1} \widehat{A} P = \begin{pmatrix}
A & \alpha_{t_1,t_2} I_r & O_r & O_r & \cdots & O_r \\
\alpha_{t_1,t_2} I_r & A & \alpha_{t_1,t_2} I_r & O_r & \cdots & O_r \\
O_r & \alpha_{t_1,t_2} I_r & A & \alpha_{t_1,t_2} I_r & \cdots & O_r \\
\vdots & \vdots & \ddots & \ddots & \ddots & \vdots \\
O_r & O_r & \cdots & \alpha_{t_1,t_2} I_r & A & \alpha_{t_1,t_2} I_r \\
O_r & O_r & \cdots & O_r & \alpha_{t_1,t_2} I_r & A
\end{pmatrix}.
$$
Notably, $P^{-1} \widehat{A} P = I_q \otimes A + (\alpha_{t_1,t_2} K_q) \otimes I_r$.

Define $P_{T_{RN}}, P_{M_s}$, and $P_{S_n}$ as
\begin{align*}
P_{T_{RN}} &= \mathrm{diag}(\alpha_{e^{-1},f} I_{ns}, \alpha_{e^{-1},f}^2 I_{ns}, \cdots, \alpha_{e^{-1},f}^m I_{ns}), \\
P_{M_s} &= \mathrm{diag}(\alpha_{a,b^{-1}} I_{n}, \alpha_{a,b^{-1}}^2 I_{n}, \cdots, \alpha_{a,b^{-1}}^s I_{n}), \\
P_{S_n} &= \mathrm{diag}(\alpha_{c,d^{-1}}, \alpha_{c,d^{-1}}^2, \cdots, \alpha_{c,d^{-1}}^n),
\end{align*}
respectively. It follows immediately that
\begin{align*}
P_{T_{RN}}^{-1} T_{RN} P_{T_{RN}} &= I_m \otimes M_s + (\alpha_{e,f} K_m) \otimes I_{ns}, \\
P_{M_s}^{-1} M_s P_{M_s} &= I_s \otimes S_n(c, d) + (\alpha_{a,b} K_s) \otimes I_n,
\end{align*}
and $P_{S_n}^{-1} S_n(c, d) P_{S_n} = \alpha_{c,d} K_n$. Let $\mathbf{E}_j$ denote the set of eigenvalues of $K_j$ for $j \in \mathbb{N}$. We then derive that $\mathbf{E}_T = \alpha_{a, b} \mathbf{E}_s + \alpha_{c,d} \mathbf{E}_n + \alpha_{e, f} \mathbf{E}_m$. It remains to show that $\mathbf{E}_j = \mathbf{R}_j$.

Let $g_j (x) = \det (x I_j - K_j)$ be the characteristic polynomial of $K_j$ for $j \geq 1$. Let $g_0(x) = 1$ and $g_j(x) = 0$ for $j < 0$. It can be verified that $g_j(x)$ satisfies the following recurrence relation:
$$
g_j (x) = x g_{j-1} (x) - g_{j-2} (x), \qquad j \geq 1.
$$
Let $G(u, x) = \sum\limits_{j \geq 0} g_j(x) u^j$ be the generating function. It follows immediately that
$$
G(u, x) = \dfrac{1}{u^2 - xu + 1} = \sum_{j \geq 0} (u (x - u))^j.
$$
We can conclude that
$$
g_j(x) = \sum\limits_{i=0}^{[j/2]} (-1)^i {{j-i}\choose{i}} x^{j-2i}.
$$
This completes the proof.
\end{proof}

We remark that, when the discussion focuses on the finite field $\mathbb{Z}_p$, some of the eigenvalues of $T_{RN}$ are in the algebraic closure of $\mathbb{Z}_p$ just like the real matrices considered in Theorem \ref{thm:eigenvalue-set-decomposition}. Therefore, we study the eigenvalues of $T_{RN}$ in the extension field of $\mathbb{Z}_p$ where the characteristic polynomial of $T_{RN}$ splits in. Furthermore, the discussion of Theorems \ref{thm:eigenvalue-set-decomposition} and \ref{thm:main-theorem-algorithm-for-JT-and-Inverse} apply to real matrices analogously.

\begin{proposition}\label{prop:roots-of-gj}
Suppose that $k < \ell$ are two positive integers. Let $h(x) = \mathrm{gcd}(g_k(x), g_{\ell}(x)) \in \mathbb{Z}[x]$ be the greatest common divisor of $g_k(x)$ and $g_{\ell}(x)$. Then
\begin{enumerate}[\bf (i)]
\item $\mathrm{deg}~h(x) = \mathrm{gcd}(k+1, \ell + 1) - 1$;
\item $h(x) = g_k(x)$ if and only if $(k+1) | (\ell + 1)$.
\end{enumerate}
\end{proposition}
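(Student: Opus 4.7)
The plan is to extract a closed-form root set for each $g_j$ from the generating function $G(u,x) = 1/(u^2 - xu + 1)$ already derived in the proof of Theorem \ref{thm:eigenvalue-set-decomposition}, then reduce the gcd to a counting problem on rational numbers of bounded denominator. First I factor $u^2 - xu + 1 = (1 - uy)(1 - uy^{-1})$ under the substitution $x = y + y^{-1}$ and expand in partial fractions to obtain
\[
g_j(x) \;=\; \frac{y^{j+1} - y^{-(j+1)}}{y - y^{-1}}, \qquad x = y + y^{-1}.
\]
Setting $y = e^{i\theta}$ converts this to the Chebyshev-type identity $g_j(2\cos\theta) = \sin((j+1)\theta)/\sin\theta$, so $g_j$ is a monic integer polynomial of degree $j$ whose roots are the $j$ distinct simple values $\xi_m^{(j)} := 2\cos\bigl(m\pi/(j+1)\bigr)$, $m = 1, \ldots, j$.

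Next I compute the degree of $h$ by counting common roots. Since $\cos$ is strictly decreasing on $(0,\pi)$, $\xi_a^{(k)} = \xi_b^{(\ell)}$ holds iff $a(\ell+1) = b(k+1)$ with $1 \leq a \leq k$ and $1 \leq b \leq \ell$. Writing $d = \gcd(k+1, \ell+1)$, $k+1 = du$, $\ell+1 = dv$ with $\gcd(u,v) = 1$, the equation reduces to $av = bu$; since $\gcd(u,v) = 1$ this forces $(a,b) = (ut, vt)$ for some positive integer $t$, and the bounds collapse to $1 \leq t \leq d - 1$, producing exactly $d - 1$ common roots. Because $g_k$ and $g_\ell$ are both monic and squarefree, the monic gcd in $\mathbb{Z}[x]$ equals the product of the linear factors at these common roots, yielding $\deg h = d - 1 = \gcd(k+1,\ell+1) - 1$, which is (i). Part (ii) is then immediate: $h = g_k$ iff $\deg h = k$ iff $d = k+1$, and since $d \mid (k+1)$ this is equivalent to $(k+1) \mid (\ell+1)$.

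The main obstacle, and the only point requiring care, is justifying that the analytic root-count carried out over $\mathbb{C}$ actually delivers the degree of the gcd $h$ living in $\mathbb{Z}[x]$. This uses two ingredients: (a) both $g_k$ and $g_\ell$ have pairwise distinct simple roots in $\mathbb{C}$, established in Step 1, and (b) by Gauss's lemma the monic gcd of two monic polynomials in $\mathbb{Z}[x]$ coincides with their monic gcd in $\mathbb{Q}[x]$, which in turn factors over $\mathbb{C}$ as the product of $(x - \xi)$ over the common roots $\xi$. Consequently $h(x) = \prod_{t=1}^{d-1}\bigl(x - 2\cos(tu\pi/(k+1))\bigr)$, a polynomial of degree $d - 1$, closing the argument.
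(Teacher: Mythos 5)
Your proposal is correct and follows essentially the same route as the paper: both extract the Chebyshev-type closed form for $g_j$ from the generating function, identify the roots as $2\cos\bigl(r\pi/(j+1)\bigr)$ for $1 \leq r \leq j$, and count coincidences of these roots to get $\gcd(k+1,\ell+1)-1$. You supply two details the paper leaves implicit --- the explicit $\gcd(u,v)=1$ counting argument and the Gauss's-lemma justification that the complex root count equals $\deg h$ in $\mathbb{Z}[x]$ --- which is a welcome tightening but not a different proof.
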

\begin{proof}
The proof of Theorem \ref{thm:eigenvalue-set-decomposition} reveals the generating function $G(u, x)$ of $g_j(x)$ as
$$
G(u, x) = \dfrac{1}{u^2 - xu + 1} = \dfrac{1}{\sqrt{x^2-4}} \left(\dfrac{1}{u - \frac{x + \sqrt{x^2-4}}{2}} - \dfrac{1}{u - \frac{x - \sqrt{x^2-4}}{2}}\right).
$$
This demonstrates that an alternative expression of $g_j(x)$ is
$$
g_j(x) = \dfrac{1}{\sqrt{x^2-4}} \left[\left(\dfrac{x + \sqrt{x^2-4}}{2}\right)^{j+1} - \left(\dfrac{x - \sqrt{x^2-4}}{2}\right)^{j+1}\right].
$$
Suppose that $\lambda$ is a root of $g_j(x)$. Then
$$
\left(\frac{\lambda + \sqrt{\lambda^2-4}}{\lambda - \sqrt{\lambda^2-4}}\right)^{j+1} = 1 \quad \Rightarrow \quad \dfrac{\lambda + \sqrt{\lambda^2-4}}{\lambda - \sqrt{\lambda^2-4}} = \exp \left( \frac{2r\pi}{j+1}i \right),
$$
where $r = 1, 2, \cdots, j$. It follows that
$$
\left(\lambda + \sqrt{\lambda^2-4}\right)^2 = 4 \exp \left( \frac{2r\pi}{j+1}i \right)
$$
Thus,
$$
\lambda + \sqrt{\lambda^2-4} = \pm 2 \exp \left( \frac{r\pi}{j+1}i \right) \quad \Rightarrow \quad \lambda = \pm 2 \cos \frac{r\pi}{j+1}
$$
for $1 \leq r \leq j$. Since $\cos \dfrac{r \pi}{j+1} = - \cos \dfrac{j+1-r}{j+1}\pi$, we have derived that
$$
\lambda = 2 \cos \frac{r\pi}{j+1}, \qquad 1 \leq r \leq j.
$$

Let $\{\lambda_r\}_{r=1}^k$ and $\{\lambda'_q\}_{q=1}^{\ell}$ be the set of roots of $g_k(x)$ and $g_{\ell}(x)$, respectively. It is seen that $\lambda_r = \lambda'_q$ for some $q, r$ if and only if $\dfrac{r}{k+1} = \dfrac{q}{\ell+1}$. This demonstrates that $\mathrm{deg}~h(x) = \mathrm{gcd}(k+1, \ell+1) - 1$ and $h(x) = g_k(x)$ if and only if $(k+1) | (\ell + 1)$.
\end{proof}

To investigate the reversibility of $T_{RN}$, Theorem \ref{thm:eigenvalue-set-decomposition} infers that it is essential to consider the case where the geometric means of the three pairs of parameters $\{a, b\}, \{c, d\}$, and $\{e, f\}$ exist, respectively. Let $\mathbb{Z}_p^* = \mathbb{Z}_p \setminus \{0\}$ and let $H = \{\ell \in \mathbb{Z}_p^*: \ell^{(p-1)/2} \equiv 1 \pmod{p}\}$. A proper subset $K \subset \mathbb{Z}_p^*$ is said to be in the \textbf{same partition} if $K \subseteq H$ or $K \subseteq H^c$. Euler's criterion demonstrates that $\ell \in H$ if and only if $x^2 \equiv \ell \pmod{p}$ for some $x \in \mathbb{Z}_p^*$. Suppose that $\{a, b\}, \{c, d\}$, and $\{e, f\}$ are in the same partition, respectively. It follows that the geometric means of $\{a, b\}, \{c, d\}$, and $\{e, f\}$ are in $\mathbb{Z}_p$, respectively, since $|H| = |H^c| = \dfrac{p-1}{2}$. Therefore, Theorem \ref{thm:eigenvalue-set-decomposition} still holds over the finite field $\mathbb{Z}_p$.

Define $k: \mathbb{Z}_p \times \mathbb{Z}_p \to \mathbb{Z}_p$ as
$$
k(t_1, t_2) = \left\{
\begin{aligned}
&\min\limits_{0 \leq t \leq p-1} \{t: t^2 = t_1 t_2\}, & & t_1 \neq t_2; \\
&t_1, & & t_1 = t_2.
\end{aligned}
\right.
$$
Notably, $k(t_1, t_2)$ is well-defined if $t_1 = t_2$ or the pair $\{t_1, t_2\}$ is in the same partition. For each prime $p$, let $\mathbf{SR}_p \subset \mathbb{Z}_p \times \mathbb{Z}_p$ be the domain of $k(t_1, t_2)$; we say that $(t_1, t_2) \in \mathbb{Z}_p \times \mathbb{Z}_p$ satisfies the \textbf{quadratic reciprocity law} if $(t_1, t_2) \in \mathbf{SR}_p$. For the simplicity of the notations, we denote $k(a, b), k(c, d)$, and $k(e, f)$ by $k_s, k_n$, and $k_m$, respectively.

It is known that $T_{RN}$ is reversible if and only if $0$ is not an eigenvalue of $T_{RN}$. The proof of Proposition \ref{prop:roots-of-gj} and some numerical experiments suggest that $T_{RN}$ is reversible over $\mathbb{R}$ if $m+1, n+1, s+1$ are pairwise relatively prime and $\alpha_{a, b}, \alpha_{c, d}, \alpha_{e, f} \in \mathbb{Q}$. However, the discussion of the reversibility of $T_{RN}$ over $\mathbb{Z}_p$ is more complicated since it happens that $\mathrm{deg} (\mathrm{gcd}(g_k^{[p]}(x), g_{\ell}^{[p]}(x))) > \mathrm{deg} (\mathrm{gcd}(g_k(x), g_{\ell}(x)))$ for some $p$ and $k, \ell$, where $F^{[p]}(x)$ is defined as $F^{[p]}(x) \equiv F(x) \pmod{p}$. For instance, $g_4(x)$ and $g_{78}(x)$ are relatively prime in $\mathbb{Z}[x]$ while $\mathrm{gcd}(g^{[3]}_4(x), g^{[3]}_{78}(x)) \equiv t^4 + 1 \pmod{3}$. On the other hand, numerical experiments indicate that $\mathrm{deg} (\mathrm{gcd}(g_k^{[p]}(x), g_{\ell}^{[p]}(x))) = \mathrm{deg} (\mathrm{gcd}(g_k(x), g_{\ell}(x)))$ for $p <100$ and $k, \ell < 77$.

For the rest of this section, we assume that $(a, b), (c, d)$, and $(e, f)$ satisfy the quadratic reciprocity law until stated otherwise.

\begin{corollary}\label{cor:m=n=s=2-invertible-condition}
If $n=s=m=2$, then $T_{RN}$ is reversible if and only if
\begin{enumerate}[\bf i)]
\item $k_1 \equiv k_2 \equiv k_3 \pmod{p}$ and $p \neq 3$.
\item $k_1 \equiv k_2 \not\equiv k_3$ and $k_3 \not\equiv \pm 2 k_1 \pmod{p}$.
\item $k_1, k_2$, and $k_3$ are pairwise distinct and $k_3 \not\equiv \pm (k_1 \pm k_2) \pmod{p}$.
\end{enumerate}
Herein $k_1, k_2, k_3 \in \{k_m, k_n, k_s\}$.
\end{corollary}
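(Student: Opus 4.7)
My plan is to deduce the corollary directly from Theorem \ref{thm:eigenvalue-set-decomposition} by specializing to $n=s=m=2$, reading off the eight eigenvalues of $T_{RN}$ explicitly, and then performing straightforward case analysis on the multiset $\{k_m, k_n, k_s\}$.

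First I would compute $\mathbf{R}_2$. From the closed form \eqref{eq:charpoly-for-Kj}, we have $g_2(x)=x^2-1$, so $\mathbf{R}_2=\{+1,-1\}$. Theorem \ref{thm:eigenvalue-set-decomposition} therefore yields
\begin{equation*}
\mathbf{E}_T \;=\; k_s\,\mathbf{R}_2 + k_n\,\mathbf{R}_2 + k_m\,\mathbf{R}_2 \;=\; \{\,\varepsilon_1 k_s + \varepsilon_2 k_n + \varepsilon_3 k_m : \varepsilon_i\in\{\pm 1\}\,\}\subset \mathbb{Z}_p,
\end{equation*}
a (multi)set of at most $8$ elements. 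Since $T_{RN}$ is reversible if and only if $0\notin \mathbf{E}_T$, the corollary amounts to determining exactly when no choice of signs produces $0$. Note that the quadratic reciprocity hypothesis guarantees $k_m,k_n,k_s\in\mathbb{Z}_p^\ast$, a fact I will invoke tacitly throughout the case analysis.

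Next I would split according to how many of $k_1,k_2,k_3\in\{k_m,k_n,k_s\}$ coincide modulo $p$. In case (i), all three agree, say $k_1\equiv k_2\equiv k_3\equiv k$; the eigenvalues collapse to $\pm k$ and $\pm 3k$, and since $k\not\equiv 0$, we see $0\in\mathbf{E}_T$ iff $3k\equiv 0$ iff $p=3$. In case (ii), say $k_1\equiv k_2\not\equiv k_3$ with common value $k$; the eigenvalues become $\pm k_3$ and $\pm 2k\pm k_3$, so $0\in\mathbf{E}_T$ iff $k_3\equiv \pm 2k\pmod p$ (the value $\pm k_3$ is automatically nonzero). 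In case (iii) with $k_1,k_2,k_3$ pairwise distinct, the eight values $\varepsilon_1 k_1+\varepsilon_2 k_2+\varepsilon_3 k_3$ pair up into four $\pm$-related quantities $\pm(k_1+k_2+k_3)$, $\pm(k_1+k_2-k_3)$, $\pm(k_1-k_2+k_3)$, $\pm(k_1-k_2-k_3)$, and setting any of these equal to $0$ gives precisely $k_3\equiv \pm(k_1\pm k_2)\pmod p$.

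There is no real obstacle here: the only subtlety is the observation, in case (ii), that the eigenvalue $\pm k_3$ cannot vanish because $k_3\in\mathbb{Z}_p^\ast$, so that the listed condition $k_3\not\equiv \pm 2k_1\pmod p$ is actually sufficient (and not merely necessary) for reversibility; an analogous remark in case (i) explains why the sole constraint reduces to $p\neq 3$. Conversely, in each case any failure of the stated condition produces an explicit sign pattern making a diagonal entry of the conjugated matrix (equivalently, an eigenvalue) equal to $0$, so the conditions are also necessary. Combining the three cases yields the claimed equivalence.
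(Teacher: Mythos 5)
Your argument is correct and takes essentially the same route as the paper: the paper's proof simply observes that $g_2^{[p]}(x) \equiv (x+1)(x-1) \pmod{p}$, so $\mathbf{R}_2 = \{1,-1\}$, and declares the rest a ``straightforward examination'' --- which is precisely the sign-pattern case analysis on $\varepsilon_1 k_1 + \varepsilon_2 k_2 + \varepsilon_3 k_3$ that you spell out. Your explicit remark that $k_1,k_2,k_3 \in \mathbb{Z}_p^{\ast}$ (needed so that the terms $\pm k_3$ and $\pm k$ cannot themselves vanish) is a detail the paper leaves tacit, and it is handled correctly.
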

\begin{proof}
Since $g_2^{[p]}(x) \equiv (x+1)(x-1) \pmod{p}$, the desired result follows from straightforward examination.
\end{proof}

\begin{corollary}\label{cor:m=n=s=2-inverse}
Suppose that $n=s=m=2$ and $T_{RN}$ is reversible. Then
$$
T_{RN}^{-1} \sim \mathrm{diag}(1, 1, 1, p-1, p-1, p-1, 3^{-1}, p - 3^{-1}) \pmod{p},
$$
where $A \sim B$ denotes that the matrices $A$ and $B$ are similar.
\end{corollary}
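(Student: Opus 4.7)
The plan is to make the diagonalization in the proof of Theorem~\ref{thm:eigenvalue-set-decomposition} completely explicit when $n=s=m=2$. Because $g_2(x)=x^2-1$ factors as $(x-1)(x+1)$ over $\mathbb{Z}_p$ for every odd prime~$p$, we have $\mathbf{R}_2=\{\pm 1\}$, and $K_2$ is similar over $\mathbb{Z}_p$ to $\mathrm{diag}(1,-1)$ via the concrete matrix $Q=\bigl(\begin{smallmatrix}1&1\\1&-1\end{smallmatrix}\bigr)$, which is invertible $\bmod~p$ because $\det Q=-2$ and $p$ is odd. Composing the cascade of conjugations $P_{S_n},P_{M_s},P_{T_{RN}}$ supplied by the proof of Theorem~\ref{thm:eigenvalue-set-decomposition} with the further conjugation by $Q\otimes Q\otimes Q$ will reduce $T_{RN}$ to an honest diagonal matrix whose eight entries are the values $\varepsilon_1 k_n+\varepsilon_2 k_s+\varepsilon_3 k_m\pmod p$ with $\varepsilon_1,\varepsilon_2,\varepsilon_3\in\{\pm 1\}$.

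With an explicit diagonalization in hand, $T_{RN}^{-1}$ is similar to the diagonal matrix whose entries are the multiplicative inverses of those eight quantities, all well-defined in $\mathbb{Z}_p$ by the reversibility hypothesis together with Corollary~\ref{cor:m=n=s=2-invertible-condition}. Specialising to the regime $k_n\equiv k_s\equiv k_m\equiv 1\pmod p$ (case~(i) of that corollary with common value $1$), the eight diagonal entries collapse to $\pm 3$ (once each, nonzero because $p\neq 3$) and $\pm 1$ (three times each); inverting and permuting the coordinates then yields exactly $\mathrm{diag}(1,1,1,p-1,p-1,p-1,3^{-1},p-3^{-1})$, which is the claimed similarity class.

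The step I expect to demand the most care is verifying that the simultaneous diagonalization really takes place over $\mathbb{Z}_p$ itself rather than over an extension field. The quadratic reciprocity hypothesis on the coefficient pairs places $k_n,k_s,k_m$ in $\mathbb{Z}_p$; the splitting of $g_2(x)$ over $\mathbb{Z}_p$ places the change-of-basis matrix $Q\otimes Q\otimes Q$ with entries in $\mathbb{Z}_p$; and the fact that the three Kronecker summands $I\otimes I\otimes K_2$, $I\otimes K_2\otimes I$, $K_2\otimes I\otimes I$ commute pairwise is what allows a single change of basis to diagonalize all three simultaneously. Once these three points are secured, the identification of $T_{RN}^{-1}$ reduces to the elementary bookkeeping of the eight signed sums $\pm k_n\pm k_s\pm k_m$ and their reciprocals in $\mathbb{Z}_p$.
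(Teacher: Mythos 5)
Your proposal is correct and follows essentially the same route as the paper, whose proof of this corollary simply defers to Example~\ref{eg:m=n=s=2-p-not3}: there the authors take $a=b=\dots=f=1$ (so $k_n=k_s=k_m=1$), conjugate by an explicit $8\times 8$ matrix $U_8$ playing the role of your $Q\otimes Q\otimes Q$, and read off the spectrum $\{\pm 3,\pm 1,\pm 1,\pm 1\}$ and its reciprocals exactly as you do. The specialization $k_n\equiv k_s\equiv k_m\equiv 1\pmod{p}$ that you honestly flag is genuinely needed and is also (silently) assumed by the paper: for general reversible parameters with $n=s=m=2$, say $k_n=k_s=1$, $k_m=4$, $p=11$, the spectrum of $T_{RN}^{-1}$ is $\{\pm 6^{-1},\pm 2^{-1},\pm 4^{-1},\pm 4^{-1}\}=\{2,9,6,5,3,8,3,8\}$ rather than the multiset $\{1,1,1,10,10,10,4,7\}$ claimed in the corollary, so the statement as printed only holds in the normalized case that you (and the paper's example) actually treat.
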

Corollary \ref{cor:m=n=s=2-inverse} is an immediate application of Theorem \ref{thm:eigenvalue-set-decomposition}. The detailed discussion is postponed to Example \ref{eg:m=n=s=2-p-not3}.

Proposition \ref{prop:irreducible-imply-diagonalizable} comes immediately from Theorem \ref{thm:eigenvalue-set-decomposition} and Proposition \ref{prop:irreducible-polynomial-only-simple-roots}, the proof is thus omitted.

\begin{proposition}\label{prop:irreducible-imply-diagonalizable}
If $g_n^{[p]}, g_s^{[p]}, g_m^{[p]}$ are irreducible over $\mathbb{Z}_p$, then
$$
T_{RN} \sim \mathrm{diag}(k_n \lambda_i + k_s \kappa_j + k_m \iota_{\ell})_{1 \leq i \leq n, 1 \leq j \leq s, 1 \leq \ell \leq m}.
$$
Furthermore, if $T_{RN}$ is reversible, then
$$
T_{RN}^{-1} \sim \mathrm{diag}((k_n \lambda_i + k_s \kappa_j + k_m \iota_{\ell})^{-1})_{1 \leq i \leq n, 1 \leq j \leq s, 1 \leq \ell \leq m},
$$
where $\{\lambda_i\}_{i=1}^n, \{\kappa_j\}_{j=1}^s$, and $\{\iota_{\ell}\}_{\ell=1}^m$ are roots of $g_n^{[p]}, g_s^{[p]}$, and $g_m^{[p]}$ over the splitting field for $\{g_n^{[p]}, g_s^{[p]}, g_m^{[p]}\}$.
\end{proposition}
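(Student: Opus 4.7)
The plan is to assemble the result from three ingredients already in the paper: the triple conjugation constructed in the proof of Theorem \ref{thm:eigenvalue-set-decomposition}, the separability fact of Proposition \ref{prop:irreducible-polynomial-only-simple-roots}, and the eigenvector formula for Kronecker sums recalled in Section \ref{sec:preliminary}. First I would transcribe the three conjugations
$$
P_{T_{RN}}^{-1} T_{RN} P_{T_{RN}}, \qquad P_{M_s}^{-1} M_s P_{M_s}, \qquad P_{S_n}^{-1} S_n(c,d) P_{S_n}
$$
into the finite-field setting by replacing every occurrence of $\alpha_{t_1,t_2}$ with its $\mathbb{Z}_p$-counterpart $k(t_1,t_2)$; this is legitimate because $(a,b),(c,d),(e,f)$ are assumed to satisfy the quadratic reciprocity law, so each of $k_n,k_s,k_m$ lies in $\mathbb{Z}_p$ and the conjugating diagonal blocks are products of its nonzero powers, hence invertible over $\mathbb{Z}_p$. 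Composing the three conjugations exhibits $T_{RN}$ as similar to the triple Kronecker sum
$$
\bigl((k_m K_m)\otimes I_s\otimes I_n\bigr) + \bigl(I_m\otimes (k_s K_s)\otimes I_n\bigr) + \bigl(I_m\otimes I_s\otimes (k_n K_n)\bigr).
$$

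Next I would diagonalize each factor $K_n, K_s, K_m$ separately over the splitting field for $\{g_n^{[p]}, g_s^{[p]}, g_m^{[p]}\}$. The proof of Theorem \ref{thm:eigenvalue-set-decomposition} identifies $g_j$ as the characteristic polynomial of $K_j$; under the irreducibility hypothesis, Proposition \ref{prop:irreducible-polynomial-only-simple-roots} guarantees that the $j$ roots of $g_j^{[p]}$ in the splitting field are pairwise distinct, hence $K_j$ admits a full eigenbasis. Writing $\{\mathbf{x}_i\},\{\mathbf{y}_j\},\{\mathbf{z}_\ell\}$ for eigenbases attached to $\lambda_i,\kappa_j,\iota_\ell$, the Kronecker-sum eigenvector identity recalled in Section \ref{sec:preliminary} (applied once and then iterated) produces $nsm$ vectors $\mathbf{z}_\ell\otimes\mathbf{y}_j\otimes\mathbf{x}_i$ that are linearly independent (being a tensor product of bases) and are eigenvectors of the triple Kronecker sum above with eigenvalue $k_n\lambda_i + k_s\kappa_j + k_m\iota_\ell$. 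This yields the claimed diagonal similarity.

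For the second assertion, once $T_{RN}$ is invertible every such eigenvalue is nonzero, so the diagonal matrix $D$ in the first assertion is itself invertible and, writing $Q$ for the change-of-basis matrix built from the tensor-product eigenbasis, $T_{RN}^{-1} = Q D^{-1} Q^{-1}$, which is precisely the stated similarity. The only step requiring any care, and the main (if modest) obstacle, is verifying that the composite of the three conjugating matrices remains invertible over the splitting field and that the tensor product of eigenbases is a basis of the whole space; both are formal consequences of the Kronecker product identities listed in Section \ref{sec:preliminary}, which is why the author regards the proof as immediate and omits it.
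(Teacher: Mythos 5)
Your proposal is correct and follows exactly the route the paper intends: the paper omits the proof, stating only that the proposition ``comes immediately from Theorem \ref{thm:eigenvalue-set-decomposition} and Proposition \ref{prop:irreducible-polynomial-only-simple-roots},'' and your argument simply fills in those same two ingredients (the conjugation to a triple Kronecker sum over $\mathbb{Z}_p$ and separability giving distinct roots, hence a tensor-product eigenbasis). No gap; your write-up is a faithful expansion of the omitted proof.
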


\begin{example}\label{eg:m=n=s=4-p=3}
Suppose that $n = s = m = 4$ and $p = 3$. Since $g_4(x) = x^4 - 3x + 1 \equiv (x^2 + x + 2)(x^2 + 2x + 2) \pmod{3}$ is decomposed as two relatively prime irreducible polynomials, we can conclude that two factors of $g_4(x)$ are separable in their splitting field $\mathbb{Z}_3(\alpha)$, where $\alpha$ is a root of $x^2 + 1 \equiv 0 \pmod{3}$. It can be verified that $k_n = k_s = k_m = 1$. Proposition \ref{prop:irreducible-imply-diagonalizable} infers that
$$
T_{RN} \sim \mathrm{diag}(\lambda_i + \lambda_j + \lambda_k)_{1 \leq i, j, k \leq 4},
$$
where $\lambda_1 = 1 + \alpha, \lambda_2 = 1 + 2 \alpha$ are the roots of $x^2 + x + 2$, and $\lambda_3 = 2 + \alpha, \lambda_4 = 2 + 2 \alpha$ are the roots of $x^2 + 2x + 2$. Since $0$ is an eigenvalue of $T_{RN}$, the cellular automaton $\Phi$ is irreversible.
\end{example}

\begin{lemma}\label{lem:Inverse-of-Jordan-form}
Suppose that $\{A_i\}_{i=1}^k$ are a collection of invertible $r \times r$ matrices. Then
\begin{equation}\label{eq:semi-Jordan-form}
\mathbf{A} = \begin{pmatrix}
A_1 & \omega_1 I_r & O_r & \cdots & O_r \\
O_r & A_2 &  \omega_2 I_r & \cdots & O_r \\
\vdots & \vdots & \ddots & \vdots & \vdots \\
O_r & O_r & \cdots & A_{k-1} & \omega_{k-1} I_r \\
O_r & O_r & \cdots & O_r & A_k
\end{pmatrix}
\end{equation}
is invertible with inverse matrix
$$
\mathbf{A}^{-1} = \begin{pmatrix}
A_1^{-1} & - \omega_1 A_1^{-1}A_2^{-1} & \omega_1 \omega_2 A_1^{-1}A_2^{-1}A_3^{-1} & \cdots & (-1)^{k-1} \prod\limits_{i=1}^{k-1} \omega_i \prod\limits_{i=1}^k A_i^{-1} \\
O_r & A_2^{-1} & - \omega_2 A_2^{-1}A_3^{-1} & \cdots & (-1)^{k-2} \prod\limits_{i=2}^{k-1} \omega_i \prod\limits_{i=2}^k A_i^{-1} \\
\vdots & \vdots & \ddots & \vdots & \vdots \\
O_r & O_r & \cdots & A_{k-1}^{-1} & - \omega_{k-1} A_{k-1}^{-1} A_k^{-1} \\
O_r & O_r & \cdots & O_r & A_k^{-1}
\end{pmatrix},
$$
where $\omega_i \in \mathbb{R}$ for $1 \leq i \leq k-1$.
\end{lemma}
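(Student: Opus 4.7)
The plan is to verify the formula by direct block multiplication. Since $\mathbf{A}$ is block upper triangular with invertible diagonal blocks $A_1, \ldots, A_k$, one has $\det \mathbf{A} = \prod_{i=1}^k \det A_i \neq 0$, so $\mathbf{A}$ is invertible and it suffices to check that $\mathbf{A}$ times the proposed matrix $\mathbf{B}$ equals $I_{kr}$ block by block; the opposite-order product $\mathbf{B}\mathbf{A} = I_{kr}$ then follows automatically.

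The key observation is that row $i$ of $\mathbf{A}$ has only two nonzero block entries, $A_i$ in column $i$ and $\omega_i I_r$ in column $i+1$ (the latter absent when $i=k$), so the $(i,j)$ block of $\mathbf{A}\mathbf{B}$ reduces to $A_i B_{ij} + \omega_i B_{i+1,j}$. For $j<i$ both $B_{ij}$ and $B_{i+1,j}$ vanish because $\mathbf{B}$ is block upper triangular, so the strictly lower block triangle of $\mathbf{A}\mathbf{B}$ is automatically zero. For $j=i$ the second term is zero and the first collapses to $A_i A_i^{-1} = I_r$. For $j > i$, write
$$
B_{ij} = (-1)^{j-i}\Bigl(\prod_{\ell=i}^{j-1}\omega_\ell\Bigr) A_i^{-1} A_{i+1}^{-1} \cdots A_j^{-1};
$$
left-multiplying by $A_i$ cancels the leading $A_i^{-1}$ and yields $(-1)^{j-i}\bigl(\prod_{\ell=i}^{j-1}\omega_\ell\bigr) A_{i+1}^{-1}\cdots A_j^{-1}$, whereas $\omega_i B_{i+1,j} = (-1)^{j-i-1}\bigl(\prod_{\ell=i}^{j-1}\omega_\ell\bigr) A_{i+1}^{-1}\cdots A_j^{-1}$, and the two summands cancel. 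An alternative route is induction on $k$: peel off the last block row and column and apply the standard identity for inverting a $2\times 2$ block triangular matrix.

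There is no substantive obstacle; the work is purely bookkeeping. The one point that demands care is that the $A_\ell$'s do not commute, so the specific order $A_i^{-1}A_{i+1}^{-1}\cdots A_j^{-1}$ must be preserved throughout the computation. This order is precisely what makes the telescoping work, since the leftmost factor $A_i^{-1}$ is exactly what gets absorbed by the left-multiplication by $A_i$ in the first summand; any other ordering convention would force a more elaborate rearrangement before cancellation.
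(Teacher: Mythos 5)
Your proof is correct, and it supplies exactly the ``straightforward'' verification that the paper itself omits (the authors state only that the proof is straightforward and give no argument). The block-bidiagonal structure of $\mathbf{A}$, the telescoping cancellation $A_iB_{ij}+\omega_iB_{i+1,j}=O_r$ for $j>i$, and the care you take with the noncommutative ordering of the $A_\ell^{-1}$ are precisely the computation the lemma relies on.
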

\begin{proof}
The proof is straightforward, and thus it is omitted.
\end{proof}

In matrix theory, Jordan form of a given matrix reveals the most important and essential information about it, such as the reversibility and limiting behavior. A matrix of the form \eqref{eq:semi-Jordan-form} is called a \textbf{generalized Jordan form} if $\omega_i \in \{0, 1\}$ for $1 \leq i \leq k-1$. In the rest of this paper, the classical Jordan form is called canonical Jordan form to distinguish it from the generalized cases.

It is seen that the generalized Jordan form is the block-type canonical Jordan form, and $T_{RN}$ is itself a generalized Jordan form if $e = 1$ and $f = 0$. Lemma \ref{lem:Inverse-of-Jordan-form} indicates that the generalized Jordan form helps in determining whether a matrix is reversible and characterizing its inverse whenever it exists.

\begin{theorem}\label{thm:main-theorem-algorithm-for-JT-and-Inverse}
Suppose that $(a, b), (c, d), (e, f)$ satisfy the quadratic reciprocity law. There is an algorithm for the computation of the generalized Jordan form of $T_{RN}$ and $T_{RN}^{-1}$, if it exists, over the splitting field for $g_n^{[p]}(x), g_s^{[p]}(x)$, and $g_m^{[p]}(x)$.
\end{theorem}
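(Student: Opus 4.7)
The plan is to realise the three-step strategy sketched at the opening of this section by chaining the block reductions already established with the factorisation of $g_r^{[p]}$ into irreducibles. First I would apply the conjugations constructed inside the proof of Theorem \ref{thm:eigenvalue-set-decomposition}. The quadratic reciprocity assumption places $k_n, k_s, k_m$ inside $\mathbb{Z}_p$, so the diagonal matrices $P_{S_n}$, $P_{M_s}$, $P_{T_{RN}}$ are defined over $\mathbb{Z}_p$, and their composition $Q := P_{T_{RN}}\,(I_m \otimes P_{M_s})\,(I_{ms} \otimes P_{S_n})$ is an explicit, computable similarity sending $T_{RN}$ to
$$
\widetilde{T} = (k_m K_m) \otimes I_{ns} + I_m \otimes (k_s K_s) \otimes I_n + I_{ms} \otimes (k_n K_n).
$$
It therefore suffices to produce a generalised Jordan form for $\widetilde{T}$.

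Second, for each $r \in \{n,s,m\}$ the algorithm factors $g_r^{[p]}(x) = \prod_i q_{r,i}(x)^{e_{r,i}}$ into distinct monic irreducibles over $\mathbb{Z}_p$; this is the only step whose cost grows with $r$ rather than with $nsm$. Proposition \ref{prop:irreducible-polynomial-only-simple-roots} guarantees that every $q_{r,i}$ is separable, so its roots form a Frobenius orbit of size $\deg q_{r,i}$ inside the splitting field $\mathbb{E}$ of $\{g_n^{[p]}, g_s^{[p]}, g_m^{[p]}\}$. Consequently there is an invertible $U_r$ over $\mathbb{E}$ for which $U_r^{-1}(k_r K_r) U_r$ is block-diagonal, carrying one Jordan block of size $e_{r,i}$ at each of the $\deg q_{r,i}$ rescaled roots $k_r \alpha^{p^j}$, and both block sizes and eigenvalues are read off directly from the factorisation.

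Third, I would conjugate $\widetilde{T}$ by $U_m \otimes U_s \otimes U_n$. The Kronecker-sum identity together with the upper-triangular shape of each Jordan block forces the resulting matrix into the template of \eqref{eq:semi-Jordan-form}: the diagonal blocks are the eigenvalue sums $k_m\iota_\ell + k_s\kappa_j + k_n\lambda_i$, while the super-diagonal entries are $0$ or $1$ inherited from the component Jordan blocks. Reversibility is then equivalent to every one of those diagonal entries being nonzero, a condition checkable in time proportional to $nsm$. When it holds, Lemma \ref{lem:Inverse-of-Jordan-form} furnishes a closed-form inverse of $\widetilde{T}$, and pulling back by $(Q\,(U_m \otimes U_s \otimes U_n))^{-1}$ returns $T_{RN}^{-1}$.

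The main obstacle is book-keeping the Jordan structure when two distinct triples $(\lambda_i,\kappa_j,\iota_\ell)$ yield the same eigenvalue sum, because the canonical Jordan form of a Kronecker sum with colliding spectra is genuinely more intricate than a direct list of component blocks. Crucially, however, the target of this theorem is only the \emph{generalised} Jordan form of \eqref{eq:semi-Jordan-form}, not the canonical one; the collisions merely affect whether particular super-diagonal entries $\omega_i$ are $0$ or $1$, and Lemma \ref{lem:Inverse-of-Jordan-form} remains applicable either way. This is precisely what allows the algorithm to remain uniform across the problematic parameter regimes.
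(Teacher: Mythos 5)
Your proposal is correct and follows essentially the same route as the paper: conjugate by the diagonal matrices of powers of $k_{t_1,t_2^{-1}}$ (which the quadratic reciprocity hypothesis makes available over $\mathbb{Z}_p$) to reduce each Toeplitz factor to a scalar multiple of $K_r$, Jordanize each $K_r$ over the splitting field, conjugate the Kronecker sum by the tensor product of the Jordanizing matrices to land in the template of \eqref{eq:semi-Jordan-form}, and invert by nested application of Lemma \ref{lem:Inverse-of-Jordan-form}. The only cosmetic difference is that you perform all three diagonal rescalings up front as a single $Q$ whereas the paper interleaves them level by level, and your closing observation that only the \emph{generalized} (not canonical) Jordan form is needed is exactly the point the paper relies on.
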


The proof of Theorem \ref{thm:main-theorem-algorithm-for-JT-and-Inverse} is divided into several parts. Firstly, we decompose $T_{RN}$ into the Kronecker sum of three smaller matrices, each of which is transformed into a multiple of binary matrix with one's only on the superdiagonal and subdiagonal, and zeros elsewhere. After revealing the canonical Jordan forms and the sets of eigenvalues of these matrices, we derive an explicit formula of the generalized Jordan forms of $T_{RN}$ and $T_{RN}^{-1}$, if it exists. Before demonstrating Theorem \ref{thm:main-theorem-algorithm-for-JT-and-Inverse}, we use the following two examples to elaborate the idea of the proof.

\begin{example}\label{eg:m=n=s=2-p-not3}
Suppose $n=s=m=2$ and $k_n \equiv k_s \equiv k_m \pmod{p}$. For the sake of simplicity, we assume that $a=b=c=d=e=f=1$; in this case, $k_n \equiv k_s \equiv k_m \equiv 1 \pmod{p}$.

Let
$$
U_2 \equiv \begin{pmatrix}
1 & 1 \\
1 & -1
\end{pmatrix} \pmod{p};
$$
it is seen that $U_2^{-1} K_2 U_2 \equiv \mathrm{diag} (1, -1) \pmod{p}$. Next, consider
$$ 
U_4 \equiv \begin{pmatrix}
1 & 1 & 1 & 1 \\
1 & 1 & -1 & -1 \\
1 & -1 & 1 & -1 \\
1 & -1 & 1 & 1
\end{pmatrix} \pmod{p}
$$
and
$$
U_8 \equiv \begin{pmatrix}
1 & 1 & 1 & 1 & 1 & 1 & 1 & 1 \\
1 & 1 & 1 & 1 & -1 & -1 & -1 & -1 \\
1 & 1 & -1 & -1 & 1 & 1 & -1 & -1 \\
1 & 1 & -1 & -1 & 1 & 1 & 1 & 1 \\
1 & -1 & 1 & -1 & 1 & -1 & 1 & -1 \\
1 & -1 & 1 & -1 & -1 & 1 & -1 & 1 \\
1 & -1 & -1 & 1 & 1 & -1 & -1 & 1 \\
1 & -1 & -1 & 1 & 1 & -1 & 1 & -1
\end{pmatrix} \pmod{p};
$$
it is easily verified that $U_4^{-1} M_4 U_4 \equiv \mathrm{diag} (2, 0, 0, -2) \pmod{p}$ and $U_8^{-1} T_{RN} U_8 \equiv \mathrm{diag} (3, 1, 1, -1, 1, -1, -1, -3) \pmod{p}$, which is reversible if and only if $p\neq 3$ (cf.~Corollary \ref{cor:m=n=s=2-invertible-condition} (i)). A straightforward examination shows that
$$
U_8^{-1} \equiv 4^{-1} \begin{pmatrix}
0 & 1 & 0 & 1 & 0 & 1 & 0 & 1 \\
0 & 1 & 0 & 1 & 0 & -1 & 0 & -1 \\
1 & 0 & 0 & -1 & 1 & 0 & 0 & -1 \\
1 & 0 & 0 & -1 & -1 & 0 & 0 & 1 \\
1 & -1 & 1 & -1 & 1 & -1 & 1 & -1 \\
1 & -1 & 1 & -1 & -1 & 1 & -1 & 1 \\
0 & 0 & -1 & 1 & 0 & 0 & -1 & 1 \\
0 & 0 & -1 & 1 & 0 & 0 & 1 & -1
\end{pmatrix} \pmod{p}.
$$
Hence, we can conclude that
\begin{align*}
T_{RN}^{-1} &\equiv U_8 \cdot \mathrm{diag} (3^{-1}, 1, 1, -1, 1, -1, -1, -3^{-1}) \cdot U_8^{-1} \pmod{p} \\
 & \equiv 3^{-1} \begin{pmatrix}
0 & 1 & 1 & 0 & 3 & -2 & 2 & -4 \\
0 & 1 & -1 & 2 & 0 & 1 & -2 & 0 \\
0 & 1 & -1 & 2 & 0 & -2 & 1 & 0 \\
0 & 1 & 1 & 0 & 0 & -2 & 2 & -1 \\
3 & -2 & 2 & -4 & 0 & 1 & 1 & 0 \\
0 & 1 & -2 & 0 & 0 & 1 & -1 & 2 \\
0 & -2 & 1 & 0 & 0 & 1 & -1 & 2 \\
0 & -2 & 2 & -1 & 0 & 1 & 1 & 0
\end{pmatrix} \pmod{p}.
\end{align*}
\end{example}

\begin{example}\label{eg:m=n=s=4-p=5-semiJordan}
Suppose that $n = s = m = 4$ and $p = 5$. It follows that
$$
g_4(x) = x^4 - 3x + 1 = (x - 2)^2 (x - 3)^2 \pmod{5}
$$
splits in $\mathbb{Z}_5 [x]$. Write $\mathbb{Z}_5^* = \{1, 4\} \bigcup \{2, 3\}$. A straightforward examination demonstrates that $k_{a, b}, k_{c, d}, k_{e, f} \in \{1, 2\}$ if $\{a, b\}, \{c, d\}$, and $\{e, f\}$ are in the same partition, respectively.

Since $\mathbf{R}_4 = \{2, 3\}$, Theorem \ref{thm:eigenvalue-set-decomposition} elaborates that the set of eigenvalues of $T_{RN}$ is
\begin{align*}
\mathbf{E}_T &= k_{a, b} \mathbf{R}_4 + k_{c, d} \mathbf{R}_4 + k_{e, f} \mathbf{R}_4 \\
&= \{t_1 k_{a, b} + t_2 k_{c, d} + t_3 k_{e, f}: t_1, t_2, t_3 = 2, 3\}.
\end{align*}
A careful examination indicates that $T_{RN}$ is reversible if and only if the triple $(k_{a, b}, k_{c, d}, k_{e, f})$ satisfies one of the following:
$$
(1, 1, 1), (1, 1, 4), (1, 4, 4), (2, 2, 2), (2, 2, 3), (2, 3, 3), (3, 3, 3), (4, 4, 4).
$$

Let
$$
P_4(t_1, t_2) = \begin{pmatrix}
k_{t_1, t_2^{-1}} & 0 & 0 & 0 \\
0 & k_{t_1, t_2^{-1}}^2 & 0 & 0 \\
0 & 0 & k_{t_1, t_2^{-1}}^3 & 0 \\
0 & 0 & 0 & k_{t_1, t_2^{-1}}^4
\end{pmatrix}
\quad \text{and} \quad
U = \begin{pmatrix}
1 & 0 & 1 & 0 \\
2 &1 & 3 & 1 \\
3 & 4 & 3 & 1 \\
4 & 0 & 1 & 0
\end{pmatrix}
$$
provided $k_{t_1, t_2^{-1}}$ exists. Then
$$
U^{-1} P_4(d, c)^{-1} S_4(c, d) P_4(d, c) U = k_{c, d} \begin{pmatrix}
2 & 1 & 0 & 0 \\
0 & 2 & 0 & 0 \\
0 & 0 & 3 & 1 \\
0 & 0 & 0 & 3
\end{pmatrix} =: k_{c, d} J_4.
$$
Furthermore,
$$
P_{16}(a, b)^{-1} M_{16} P_{16}(a, b) = I_4 \otimes S_4(c, d) + (k_{a, b} K_4) \otimes I_4,
$$
where $P_{16}(a, b) = P_{4}(a, b) \otimes I_4$. Notably,
$$
(I_4 \otimes (U^{-1} P_4(d, c)^{-1})) \cdot (I_4 \otimes S_4(c, d)) \cdot (I_4 \otimes P_4(d, c) U) = I_4 \otimes (k_{c, d} J_4),
$$
and
$$
(U^{-1} \otimes I_4) \cdot ((k_{a, b} K_4) \otimes I_4) \cdot (U \otimes I_4) = (k_{a, b} J_4) \otimes I_4.
$$
Let $\widetilde{U} = U \otimes (P_4(d, c) U)$. It follows that
\begin{align*}
&\widetilde{U}^{-1} P_{16}(a, b)^{-1} M_{16} P_{16}(a, b) \widetilde{U} = I_4 \otimes (k_{c, d} J_4) + (k_{a, b} J_4) \otimes I_4 \\
&= \begin{pmatrix}
k_{c, d} J_4 + 2 k_{a, b} I_4 & k_{a, b} I_4 & O_4 & O_4 \\
O_4 & k_{c, d} J_4 + 2 k_{a, b} I_4 & O_4 & O_4 \\
O_4 & O_4 & k_{c, d} J_4 + 3 k_{a, b} I_4 & k_{a, b} I_4 \\
O_4 & O_4 & O_4 & k_{c, d} J_4 + 3 k_{a, b} I_4
\end{pmatrix} \\
& =: J_{16}.
\end{align*}

Set $P_{64}(e, f) = P_4(e, f) \otimes I_{16}$, then
$$
P_{64}(e, f)^{-1} T_{RN} P_{64}(e, f) = I_4 \otimes M_{16} + (k_{e, f} K_4) \otimes I_{16}.
$$
Let $\widehat{U} = U \otimes \widetilde{U}$, it is seen that
\begin{align*}
&\widehat{U}^{-1} P_{64}(e, f)^{-1} T_{RN} P_{64}(e, f) \widehat{U} = I_4 \otimes J_{16} + (k_{e, f} J_4) \otimes I_{16} \\
&= \begin{pmatrix}
J_{16} + 2 k_{e, f} I_{16} & k_{e, f} I_{16} & O_{16} & O_{16} \\
O_{16} & J_{16} + 2 k_{e, f} I_{16} & O_{16} & O_{16} \\
O_{16} & O_{16} & J_{16} + 3 k_{e, f} I_{16} & k_{e, f} I_{16} \\
O_{16} & O_{16} & O_{16} & J_{16} + 3 k_{e, f} I_{16} 
\end{pmatrix} \\
& =: J_{T_{RN}} = \begin{pmatrix}
A_1 & k_{e, f} I_{16} & O_{16} & O_{16} \\
O_{16} & A_1 & O_{16} & O_{16} \\
O_{16} & O_{16} & A_2 & k_{e, f} I_{16} \\
O_{16} & O_{16} & O_{16} & A_2
\end{pmatrix}.
\end{align*}
Write
\begin{align*}
A_1 = \begin{pmatrix}
B_{1, 1} & k_{a, b} I_4 & O_4 & O_4 \\
O_4 & B_{1, 1} & O_4 & O_4 \\
O_4 & O_4 & B_{1, 2} & k_{a, b} I_4 \\
O_4 & O_4 & O_4 & B_{1, 2}
\end{pmatrix},
A_2 = \begin{pmatrix}
B_{2, 1} & k_{a, b} I_4 & O_4 & O_4 \\
O_4 & B_{2, 1} & O_4 & O_4 \\
O_4 & O_4 & B_{2, 2} & k_{a, b} I_4 \\
O_4 & O_4 & O_4 & B_{2, 2}
\end{pmatrix},
\end{align*}
where
\begin{align*}
B_{1, 1} &= k_{c, d} J_4 + (2 k_{a, b} + 2 k_{e, f}) I_4, & B_{1, 2} &= k_{c, d} J_4 + (3 k_{a, b} + 2 k_{e, f}) I_4, \\
B_{2, 1} &= k_{c, d} J_4 + (2 k_{a, b} + 3 k_{e, f}) I_4, & B_{2, 2} &= k_{c, d} J_4 + (3 k_{a, b} + 3 k_{e, f}) I_4.
\end{align*}
To increase the readability, we assume that $k_{c, d}=k_{a, b}=1, k_{e, f}=4$. It is easily seen that
\begin{align*}
B_{1, 1}^{-1} &= \begin{pmatrix}
3 & 1 & 0 & 0 \\
0 & 3 & 0 & 0 \\
0 & 0 & 2 & 1 \\
0 & 0 & 0 & 2
\end{pmatrix}, &
B_{1, 2}^{-1} &= \begin{pmatrix}
2 & 1 & 0 & 0 \\
0 & 2 & 0 & 0 \\
0 & 0 & 4 & 4 \\
0 & 0 & 0 & 4
\end{pmatrix}, \\
B_{2, 1}^{-1} &= \begin{pmatrix}
1 & 4 & 0 & 0 \\
0 & 1 & 0 & 0 \\
0 & 0 & 3 & 1 \\
0 & 0 & 0 & 3
\end{pmatrix}, &
B_{2, 2}^{-1} &= \begin{pmatrix}
3 & 1 & 0 & 0 \\
0 & 3 & 0 & 0 \\
0 & 0 & 2 & 1 \\
0 & 0 & 0 & 2
\end{pmatrix}.
\end{align*}
Lemma \ref{lem:Inverse-of-Jordan-form} illustrates that
\begin{align*}
A_1^{-1} = \begin{pmatrix}
B_{1, 1}^{-1} & B_{1,3} & O_4 & O_4 \\
O_4 & B_{1, 1}^{-1} & O_4 & O_4 \\
O_4 & O_4 & B_{1, 2}^{-1} & B_{1,4} \\
O_4 & O_4 & O_4 & B_{1, 2}^{-1}
\end{pmatrix},
A_2^{-1} = \begin{pmatrix}
B_{2, 1}^{-1} & B_{2,3} & O_4 & O_4 \\
O_4 & B_{2, 1}^{-1} & O_4 & O_4 \\
O_4 & O_4 & B_{2, 2}^{-1} & B_{2,4} \\
O_4 & O_4 & O_4 & B_{2, 2}^{-1}
\end{pmatrix},
\end{align*}
where
\begin{align*}
B_{1,3} &= \begin{pmatrix}
1 & 4 & 0 & 0 \\
0 & 1 & 0 & 0 \\
0 & 0 & 1 & 1 \\
0 & 0 & 0 & 1
\end{pmatrix}, &
B_{1,4} &= \begin{pmatrix}
1 & 1 & 0 & 0 \\
0 & 1 & 0 & 0 \\
0 & 0 & 4 & 3 \\
0 & 0 & 0 & 4
\end{pmatrix}, \\
B_{2,3} &= \begin{pmatrix}
4 & 2 & 0 & 0 \\
0 & 4 & 0 & 0 \\
0 & 0 & 1 & 1 \\
0 & 0 & 0 & 1
\end{pmatrix}, &
B_{2,4} &= \begin{pmatrix}
1 & 1 & 0 & 0 \\
0 & 1 & 0 & 0 \\
0 & 0 & 1 & 1 \\
0 & 0 & 0 & 1
\end{pmatrix}.
\end{align*}
Analogous calculation to the above demonstrates that
$$
T_{RN}^{-1} = P_{64}(e, f) \widehat{U} J_{T_{RN}}^{-1} \widehat{U}^{-1} P_{64}(e, f)^{-1},
$$
where
$$
J_{T_{RN}}^{-1} = \begin{pmatrix}
A_1^{-1} & A_3 & O_{16} & O_{16} \\
O_{16} & A_1^{-1} & O_{16} & O_{16} \\
O_{16} & O_{16} & A_2^{-1} & A_4 \\
O_{16} & O_{16} & O_{16} & A_2^{-1}
\end{pmatrix}
$$
with
$$
A_3 = 4 \begin{pmatrix}
B_{1,3} & A_{3,1} & O_4 & O_4 \\
O_4 & B_{1,3} & O_4 & O_4 \\
O_4 & O_4 & B_{1,4} & A_{3,2} \\
O_4 & O_4 & O_4 & B_{1,4}
\end{pmatrix},
A_4 = 4 \begin{pmatrix}
B_{2,3} & A_{4,1} & O_4 & O_4 \\
O_4 & B_{2,3} & O_4 & O_4 \\
O_4 & O_4 & B_{2,4} & A_{4,2} \\
O_4 & O_4 & O_4 & B_{2,4}
\end{pmatrix},
$$
and
\begin{align*}
A_{3,1} &= \begin{pmatrix}
4 & 4 & 0 & 0 \\
0 & 4 & 0 & 0 \\
0 & 0 & 1 & 4 \\
0 & 0 & 0 & 1
\end{pmatrix}, &
A_{3,2} &= \begin{pmatrix}
1 & 4 & 0 & 0 \\
0 & 1 & 0 & 0 \\
0 & 0 & 3 & 4 \\
0 & 0 & 0 & 3
\end{pmatrix}, \\
A_{4,1} &= \begin{pmatrix}
2 & 4 & 0 & 0 \\
0 & 2 & 0 & 0 \\
0 & 0 & 4 & 2 \\
0 & 0 & 0 & 4
\end{pmatrix}, &
A_{4,2} &= \begin{pmatrix}
4 & 2 & 0 & 0 \\
0 & 4 & 0 & 0 \\
0 & 0 & 1 & 4 \\
0 & 0 & 0 & 1
\end{pmatrix}.
\end{align*}
\end{example}

\begin{proof}[Proof of Theorem \ref{thm:main-theorem-algorithm-for-JT-and-Inverse}]
Since $(a, b), (c, d), (e, f)$ satisfy the quadratic reciprocity law, it is easily seen that $k_{a,b^{-1}}, k_{c, d^{-1}}$, and $k_{e^{-1}, f}$ are well-defined. Let $\mathbb{E}$ be the splitting field for $g_n^{[p]}(x), g_s^{[p]}(x)$, and $g_m^{[p]}(x)$. For $j \in \{n, s, m\}$, there exists $U_j \in \mathcal{M}_j (\mathbb{E})$ such that $U_j^{-1} K_j U_j \equiv J_j$ is the canonical Jordan form of $K_j$ in $\mathbb{E}$. We divide the proof into several steps.

\noindent \textbf{Step 1.} Let $P_{c, d} = \mathrm{diag}(k_{c, d^{-1}}, k_{c, d^{-1}}^2, \cdots, k_{c, d^{-1}}^n)$ and let $U_{S_n} = P_{c, d} U_n$. It follows that $U_{S_n}^{-1} S_n(c, d) U_{S_n} = k_{c, d} J_n$, where
$$
J_n = \begin{pmatrix}
\lambda_{n, 1} & \epsilon_{n, 1} & 0 & \cdots & 0 \\
0 & \lambda_{n, 2} & \epsilon_{n, 2} & \cdots & 0 \\
\vdots & \vdots & \ddots & \vdots & \vdots \\
0 & 0 & \cdots & \lambda_{n, n-1} & \epsilon_{n, n-1} \\
0 & 0 & \cdots & 0 & \lambda_{n, n}
\end{pmatrix},
$$
and $\epsilon_{n, \ell} \in \{0, 1\}$ for $1 \leq \ell \leq n-1$.

\noindent \textbf{Step 2.} Let $P_{a,b} = \mathrm{diag}(k_{a,b^{-1}}, k_{a,b^{-1}}^2, \cdots, k_{a,b^{-1}}^s) \otimes I_{n}$. Since $M_s = I_s \otimes S_n(c, d) + S_s(a, b) \otimes I_n$, we can derive that
$$
P_{a, b}^{-1} M_s P_{a, b} =  I_s \otimes S_n(c, d) + (k_{a, b} K_s) \otimes I_n.
$$
Let $U_{M_s} = P_{a, b} \cdot (U_s \otimes U_{S_n})$. Then
\begin{align*}
U_{M_s}^{-1} M_s U_{M_s} &= (U_s^{-1} \otimes U_{S_n}^{-1}) (P_{a, b}^{-1} M_s P_{a, b}) (U_s \otimes U_{S_n}) \\
&= (U_s^{-1} \otimes U_{S_n}^{-1}) (I_s \otimes S_n(c, d) + (k_{a, b} K_s) \otimes I_n) (U_s \otimes U_{S_n}) \\
&= I_s \otimes (k_{c, d} J_n) + (k_{a, b} J_s) \otimes I_n =: J_{M_s}.
\end{align*}

\noindent \textbf{Step 3.} Let $P_{e,f} = \mathrm{diag}(k_{e^{-1}, f}, k_{e^{-1}, f}^2, \cdots, k_{e^{-1}, f}^m) \otimes I_{ns}$ and let $U_{T_{RN}} = P_{e,f} \cdot (U_m \otimes U_{M_s})$. It follows that
$$
P_{e,f}^{-1} T_{RN} P_{e,f} = I_m \otimes M_s + (k_{e,f} K_m) \otimes I_{ns}
$$
and
\begin{align*}
U_{T_{RN}}^{-1} T_{RN} U_{T_{RN}} &= I_m \otimes J_{M_s} + (k_{e,f} J_m) \otimes I_{ns} \\
&= \begin{pmatrix}
A_1 & \epsilon_{m,1} I_{ns} & O_{ns} & \cdots & O_{ns} \\
O_{ns} & A_2 & \epsilon_{m,2} I_{ns} & \cdots & O_{ns} \\
\vdots & \vdots & \ddots & \vdots & \vdots \\
O_{ns} & O_{ns} & \cdots & A_{m-1} & \epsilon_{m,m-1} I_{ns} \\
O_{ns} & O_{ns} & \cdots & O_{ns} & A_m
\end{pmatrix} \\
&=: J_{T_{RN}},
\end{align*}
where $A_i = J_{M_s} + k_{e,f} \lambda_{m,i} I_{ns}$ for $1 \leq i \leq m$, and $\epsilon_{m, i} \in \{0, 1\}$ for $1 \leq i \leq m-1$. The desired generalized Jordan form $J_{T_{RN}}$ of $T_{RN}$ is then obtained.

\noindent \textbf{Step 4.} Suppose that $T_{RN}$ is reversible. Lemma \ref{lem:Inverse-of-Jordan-form} asserts that the explicit expression of $T_{RN}^{-1} = U_{T_{RN}} \cdot J_{T_{RN}}^{-1} \cdot U_{T_{RN}}$ follows immediately from the calculation of $A_i^{-1}$ for $1 \leq i \leq m$. Notably,
$$
A_i = \begin{pmatrix}
B_{i, 1} & \epsilon_{s, 1} I_n & O_n & \cdots & O_n \\
O_n & B_{i, 2} & \epsilon_{s, 2} I_n & \cdots & O_n \\
\vdots & \vdots & \ddots & \vdots & \vdots \\
O_n & O_n & \cdots & B_{i, s-1} & \epsilon_{s, s-1} I_n \\
O_n & O_n & \cdots & O_n & B_{i, s}
\end{pmatrix},
$$
where $B_{i, j} = k_{c, d} J_n + (k_{a, b} \lambda_{s, j} + k_{e,f} \lambda_{m,i}) I_n$ for $1 \leq j \leq s$. Lemma \ref{lem:Inverse-of-Jordan-form} demonstrates that
$$
B_{i, j}^{-1} = \begin{pmatrix}
w_{i,j,1}^{-1} & - \epsilon_{n,1} w_{i,j,1}^{-1} w_{i,j,2}^{-1} & \cdots & (-1)^{n-1} \prod\limits_{\ell = 1}^{n-1} \epsilon_{n,\ell} \prod\limits_{\ell=1}^n w_{i,j,\ell}^{-1} \\
0 & w_{i,j,2}^{-1} & \cdots & (-1)^{n-2} \prod\limits_{\ell = 2}^{n-1} \epsilon_{n,\ell} \prod\limits_{\ell=2}^n w_{i,j,\ell}^{-1} \\
\vdots & \vdots & \ddots & \vdots \\
0 & \cdots & 0 & w_{i,j,n}^{-1}
\end{pmatrix},
$$
where $w_{i,j,\ell} = k_{c, d} \lambda_{n, \ell} + k_{a,b} \lambda_{s, j} + k_{e,f} \lambda_{m, i}$ for $1 \leq \ell \leq n$.

\noindent \textbf{Step 5.} The desired algorithm for deriving the generalized Jordan form of $T_{RN}$ and its inverse matrix, if it exists, is as follows.
\begin{enumerate}[\sc {JFA}1.]
\item Find $U_r$ such that $U_r^{-1} K_r U_r \equiv J_r$ is a canonical Jordan form over the splitting field for $\{g_n^{[p]}, g_s^{[p]}, g_m^{[p]}\}$, where $r = n, s, m$.

\item Let
\begin{align*}
P_{c, d} &= \mathrm{diag}(k_{c, d^{-1}}, k_{c, d^{-1}}^2, \cdots, k_{c, d^{-1}}^n), & U_{S_n} &= P_{c, d} \cdot U_n, \\
P_{a,b} &= \mathrm{diag}(k_{a,b^{-1}}, k_{a,b^{-1}}^2, \cdots, k_{a,b^{-1}}^s) \otimes I_{n}, & U_{M_s} &= P_{a, b} \cdot (U_s \otimes U_{S_n}), \\
P_{e,f} &= \mathrm{diag}(k_{e^{-1}, f}, k_{e^{-1}, f}^2, \cdots, k_{e^{-1}, f}^m) \otimes I_{ns}, & U_{T_{RN}} &= P_{e,f} \cdot (U_m \otimes U_{M_s}).
\end{align*}
Then $U_{T_{RN}}^{-1} T_{RN} U_{T_{RN}}$ is the desired generalized Jordan form of $T_{RN}$.

\item Let $\{\epsilon_{r_1, r_2}\}_{r_2=1}^{r_1-1} \subseteq \{0, 1\}$ be the set obtained from $J_{r_1}$, where $r_1 = m, n, s$. Define
$$
w_{i,j,\ell} = k_{c, d} \lambda_{n, \ell} + k_{a,b} \lambda_{s, j} + k_{e,f} \lambda_{m, i},
$$
where $1 \leq i \leq m, 1 \leq j \leq s$, and $1 \leq \ell \leq n$. Furthermore, let
$$
C_{i,j}(q_1, q_2) = \left\{\begin{aligned}
&w_{i,j,q_1}^{-1}, & & q_1=q_2; \\
&(-1)^{q_2-q_1} \prod\limits_{r = q_1}^{q_2-1} \epsilon_{n,r} \prod\limits_{r=q_1}^{q_2} w_{i,j,r}^{-1}, & & q_1<q_2; \\
&0, & & q_1 > q_2;
\end{aligned}\right.
$$
for $1 \leq q_1, q_2 \leq n$,
$$
D_i(q_1, q_2) = \left\{\begin{aligned}
&C_{i,q_1}, & & q_1=q_2; \\
&(-1)^{q_2-q_1} \prod\limits_{r = q_1}^{q_2-1} \epsilon_{s,r} \prod\limits_{r=q_1}^{q_2} C_{i,r}^{-1}, & & q_1<q_2; \\
&O_{n}, & & q_1 > q_2;
\end{aligned}\right.
$$
for $1 \leq q_1, q_2 \leq s$. We obtain
$$
J_{T_{RN}}^{-1}(q_1, q_2) = \left\{\begin{aligned}
&D_{q_1}^{-1}, & & q_1=q_2; \\
&(-1)^{q_2-q_1} \prod\limits_{r = q_1}^{q_2-1} \epsilon_{m,r} \prod\limits_{r=q_1}^{q_2} D_{r}^{-1}, & & q_1<q_2; \\
&O_{ns}, & & q_1 > q_2;
\end{aligned}\right.
$$
herein $1 \leq q_1, q_2 \leq m$. The inverse matrix of $T_{RN}$ then follows immediately.
\end{enumerate}

This completes the proof.
\end{proof}

\begin{remark}
It can be verified without difficulty that $T_{RN}$ is diagonalizable if and only if $K_n, K_s$, and $K_m$ are all diagonalizable. Furthermore, $J_{T_{RN}}$ is a canonical Jordan form if and only if $K_s$ and $K_m$ are both diagonalizable.
\end{remark}

\section{Three Dimensional Cellular Automata: General Cases} \label{sec:3d-ca-general}

The study of the reversibility, generalized Jordan form, and the inverse matrix, if it exists, of $T_{RN}$ can extend to more general cases. This section is devoted to the discussion of general conditions. We start with the case where $c_0 = 0$.

Recall that $T_{RN} = I_m \otimes M_s + S_m(f, e) \otimes I_{ns}$ with $M_s = I_s \otimes S_n(c, d) + S_s(a, b) \otimes I_n$; it is essential to characterize the property of the matrix
$$
S_k (t_1, t_2) = \begin{pmatrix}
0 & t_2 & 0 & 0 & \cdots & 0 \\
t_1 & 0 & t_2 & 0 & \cdots & 0 \\
0 & t_1 & 0 & t_2 & \cdots & 0 \\
\vdots & \vdots & \ddots & \ddots & \ddots & \vdots \\
0 & 0 & \cdots & t_1 & 0 & t_2 \\
0 & 0 & \cdots & 0 & t_1 & 0
\end{pmatrix}_{k \times k}.
$$

For $t_1, t_2 \in \mathbb{Z}_p$ and $j \in \mathbb{N}$, define
\begin{equation}\label{eq:charpoly-for-Sj-t1-t2}
g_{j; t_1, t_2} (x) = \sum\limits_{i=0}^{[j/2]} (-1)^i (t_1 t_2)^i {{j-i}\choose{i}} x^{j-2i}.
\end{equation}
Let $\mathbb{E}$ denote the splitting field for $g_{n; c, d}^{[p]} (x), g_{s; a, b}^{[p]} (x)$, and $g_{m; f, e}^{[p]} (x)$, and let $\mathbf{R}_{j; t_1, t_2}$ be the collection of roots of $g_{j; t_1, t_2}^{[p]} (x)$ in $\mathbb{E}$. Similar to Theorem \ref{thm:eigenvalue-set-decomposition}, the reversibility of $T_{RN}$ is revealed after we characterize its eigenvalues.

\begin{theorem}\label{thm:general-eigenvalue-set-decomposition}
The set $\mathbf{E}_{T_{RN}}$ of eigenvalues of $T_{RN}$ is
$$
\mathbf{E}_{T_{RN}} = \mathbf{R}_{n; c, d} + \mathbf{R}_{s; a, b} +\mathbf{R}_{m; e, f}.
$$
\end{theorem}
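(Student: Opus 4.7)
The plan is to mirror the strategy of Theorem \ref{thm:eigenvalue-set-decomposition}, but replace the diagonal conjugation trick (which required the geometric mean $\alpha_{t_1,t_2}$ to exist in $\mathbb{Z}_p$) by a direct computation of the characteristic polynomial of $S_k(t_1,t_2)$. Everything is carried out in the splitting field $\mathbb{E}$ for $g_{n;c,d}^{[p]}$, $g_{s;a,b}^{[p]}$, and $g_{m;f,e}^{[p]}$, so no quadratic reciprocity hypothesis is needed.

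First I would identify the characteristic polynomial of the tridiagonal matrix $S_k(t_1,t_2)$. Cofactor expansion along the first row of $xI_k - S_k(t_1,t_2)$ shows that $h_k(x) := \det(xI_k - S_k(t_1,t_2))$ satisfies the recurrence
\begin{equation*}
h_k(x) = x\, h_{k-1}(x) - t_1 t_2 \, h_{k-2}(x), \qquad k \geq 2,
\end{equation*}
with $h_0(x) = 1$ and $h_1(x) = x$. Setting $H(u,x) = \sum_{k\geq 0} h_k(x)\, u^k$, the recurrence yields the generating function
\begin{equation*}
H(u,x) = \frac{1}{1 - xu + t_1 t_2\, u^2} = \sum_{j \geq 0} u^j (x - t_1 t_2\, u)^j.
\end{equation*}
Expanding the binomial and collecting the coefficient of $u^k$ gives $h_k(x) = \sum_{i=0}^{[k/2]}(-1)^i(t_1 t_2)^i\binom{k-i}{i}x^{k-2i}$, i.e.\ $h_k(x) = g_{k;t_1,t_2}(x)$. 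In particular, the eigenvalues of $S_k(t_1,t_2)$ in $\mathbb{E}$ are precisely the elements of $\mathbf{R}_{k;t_1,t_2}$.

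Next I would assemble $T_{RN}$ from its Kronecker-sum decomposition. By construction
\begin{equation*}
M_s = I_s \otimes S_n(c,d) + S_s(a,b) \otimes I_n, \qquad T_{RN} = I_m \otimes M_s + S_m(f,e) \otimes I_{ns},
\end{equation*}
so iterating the Kronecker-sum eigenvalue rule recalled in Section \ref{sec:preliminary} (valid over any field, in particular over $\mathbb{E}$) gives
\begin{equation*}
\mathbf{E}_{T_{RN}} = \sigma\bigl(S_n(c,d)\bigr) + \sigma\bigl(S_s(a,b)\bigr) + \sigma\bigl(S_m(f,e)\bigr).
\end{equation*}
Combining this with Step 1 and observing that $g_{j;t_1,t_2}$ depends on the pair $(t_1,t_2)$ only through the product $t_1 t_2$, so that $\mathbf{R}_{m;f,e} = \mathbf{R}_{m;e,f}$, yields the claimed identity $\mathbf{E}_{T_{RN}} = \mathbf{R}_{n;c,d} + \mathbf{R}_{s;a,b} + \mathbf{R}_{m;e,f}$.

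The only real work is the generating-function computation in Step 1, which is essentially identical to the one in the proof of Theorem \ref{thm:eigenvalue-set-decomposition}; everything else is a direct invocation of Kronecker-sum spectral theory. The conceptual point worth emphasizing is that passing to the splitting field $\mathbb{E}$ removes the need for $\alpha_{t_1,t_2}$ to exist in the base field, so the result holds in full generality without any quadratic reciprocity assumption on $(a,b),(c,d),(e,f)$.
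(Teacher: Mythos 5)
Your proposal is correct and follows essentially the same route as the paper: reduce to the Kronecker-sum spectral rule over the splitting field, then compute the characteristic polynomial of the tridiagonal block $S_k(t_1,t_2)$ via the three-term recurrence and its generating function to identify it with $g_{k;t_1,t_2}$. Your added remark that $\mathbf{R}_{m;f,e}=\mathbf{R}_{m;e,f}$ because $g_{j;t_1,t_2}$ depends only on the product $t_1t_2$ is a small point the paper leaves implicit, but otherwise the arguments coincide.
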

\begin{proof}
Similar to the proof of Theorem \ref{thm:eigenvalue-set-decomposition}, it suffices to show that $g_{j; t_1, t_2}(x)$ is the characteristic polynomial of $S_j(t_1, t_2)$ since the set of eigenvalues of $T_{RN}$ is
$$
\mathbf{E}_{T_{RN}} = \mathbf{E}_{S_n(c,d)} + \mathbf{E}_{S_s(a,b)} +\mathbf{E}_{S_m(f,e)}.
$$
Let $g_{j; t_1, t_2}(x) = \det (x I_j - S_j(t_1, t_2))$ be the characteristic polynomial of $S_j (t_1, t_2)$. Set $g_{0; t_1, t_2}(x) = 1$ and $g_{j; t_1, t_2}(x) = 0$ for $j < 0$. It is easily seen that
$$
g_{j; t_1, t_2}(x) = x g_{j-1; t_1, t_2}(x) - t_1 t_2 g_{j-2; t_1, t_2}(x), \qquad j \geq 1.
$$
Let $G(u, x) = \sum\limits_{j \geq 0} g_{j; t_1, t_2}(x) u^j$ be the generating function. Then
$$
G(u, x) = \dfrac{1}{t_1 t_2 u^2 - xu + 1} = \sum_{j \geq 0} (u (x - t_1 t_2 u))^j.
$$
It follows immediately that
$$
g_{j; t_1, t_2} = \sum\limits_{i=0}^{[j/2]} (-1)^i (t_1 t_2)^i {{j-i}\choose{i}} x^{j-2i}.
$$
The proof is complete.
\end{proof}

\begin{proposition}\label{prop:roots-of-gj-t1t2}
Suppose that $k < \ell$ and $t_1 t_2 = q_1 q_2$. Let $h(x) = \mathrm{gcd}(g_{k; t_1, t_2}(x), g_{\ell; q_1, q_2}(x))$. Then
\begin{enumerate}[\bf (i)]
\item $\mathrm{deg}~h(x) = \mathrm{gcd}(k+1, \ell + 1) - 1$;
\item $h(x) = g_{k; t_1, t_2}(x)$ if and only if $(k+1) | (\ell + 1)$.
\end{enumerate}
\end{proposition}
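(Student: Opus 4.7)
The plan is to reduce Proposition \ref{prop:roots-of-gj-t1t2} to the already-proven Proposition \ref{prop:roots-of-gj} by a single change of variable. The key observation is that the coefficients of
$g_{j;t_1,t_2}(x) = \sum_{i=0}^{[j/2]} (-1)^i (t_1t_2)^i \binom{j-i}{i} x^{j-2i}$
depend only on the product $c := t_1 t_2$, not on $t_1$ and $t_2$ separately. The hypothesis $t_1 t_2 = q_1 q_2 = c$ therefore guarantees that $g_{k;t_1,t_2}$ and $g_{\ell;q_1,q_2}$ involve the same parameter $c$; I will assume $c \neq 0$, noting the degenerate case $c = 0$ reduces $g_{j;t_1,t_2}(x)$ to $x^j$ and must be handled (trivially) on its own.

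My first step is to establish the scaling identity
\[
g_{j;t_1,t_2}(x) \;=\; c^{j/2}\, g_j\!\left(\frac{x}{\sqrt{c}}\right)
\]
over any extension of the base field containing $\sqrt{c}$. This follows by comparing the recurrence $g_{j;t_1,t_2}(x) = x g_{j-1;t_1,t_2}(x) - c\,g_{j-2;t_1,t_2}(x)$ derived in the proof of Theorem \ref{thm:general-eigenvalue-set-decomposition} against $g_j(x) = x g_{j-1}(x) - g_{j-2}(x)$ under the substitution $x \mapsto x/\sqrt{c}$, with matching initial data. Equivalently, the partial-fraction expansion of the generating function yields the closed form
\[
g_{j;t_1,t_2}(x) \;=\; \frac{r_+^{j+1} - r_-^{j+1}}{r_+ - r_-}, \qquad r_\pm = \frac{x \pm \sqrt{x^2 - 4c}}{2},
\]
which is exactly the rescaling of the expression obtained in the proof of Proposition \ref{prop:roots-of-gj}.

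Combining the scaling identity with Proposition \ref{prop:roots-of-gj} shows that the roots of $g_{j;t_1,t_2}$ in the algebraic closure are exactly $\lambda_r = 2\sqrt{c}\cos\frac{r\pi}{j+1}$ for $1 \le r \le j$, all distinct; hence $g_{j;t_1,t_2}$ is separable. Since $g_{k;t_1,t_2}$ and $g_{\ell;q_1,q_2}$ share the same scaling factor $\sqrt{c}$, the map $\lambda \mapsto \lambda/\sqrt{c}$ is a bijection between their common roots and the common roots of $g_k$ and $g_\ell$. Proposition \ref{prop:roots-of-gj} counts the latter as $\gcd(k+1,\ell+1) - 1$, and characterizes when $g_k \mid g_\ell$ via the condition $(k+1)\mid(\ell+1)$. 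By separability the degree of $h(x)$ equals the number of common roots, which gives assertion (i); assertion (ii) then follows by transporting the divisibility criterion through the scaling bijection.

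The main obstacle is bookkeeping rather than conceptual: the passage through $\sqrt{c}$ forces the analysis into an extension in which $c$ is a square, and one must verify that the gcd of $g_{k;t_1,t_2}$ and $g_{\ell;q_1,q_2}$ computed in $\mathbb{Z}[x]$ (or $\mathbb{F}_p[x]$) agrees with the product of common irreducible factors counted over the splitting field. Separability of both polynomials for $c \neq 0$ resolves this: the gcd equals the product over all Galois orbits of common roots, and its degree is precisely the number of shared roots in the algebraic closure, which is what the counting argument above supplies.
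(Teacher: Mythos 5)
Your argument is correct and is essentially the route the paper intends: the paper's own ``proof'' is just the remark that it is similar to Proposition \ref{prop:roots-of-gj}, and your scaling identity $g_{j;t_1,t_2}(x) = c^{j/2}\,g_j(x/\sqrt{c})$ with $c = t_1t_2$ makes that similarity precise, carrying the roots $2\cos\frac{r\pi}{j+1}$ to $2\sqrt{c}\cos\frac{r\pi}{j+1}$ and reducing the common-root count to the one already established. One caveat worth recording: when $c=0$ both polynomials degenerate to pure powers of $x$ and the stated conclusion actually fails (e.g.\ $\gcd(x,x^2)=x$ has degree $1$ while $\gcd(2,3)-1=0$), so the hypothesis $t_1t_2\neq 0$ is genuinely needed rather than ``trivially handled'' --- but that is an omission in the proposition's statement, not a defect in your reduction.
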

\begin{proof}
The proof is similar to the proof of Proposition \ref{prop:roots-of-gj}, thus it is omitted.
\end{proof}

\begin{corollary}
$T_{RN}$ is reversible if and only if
$$
0 \notin \mathbf{R}_{n; c, d} + \mathbf{R}_{s; a, b} +\mathbf{R}_{m; e, f}.
$$
\end{corollary}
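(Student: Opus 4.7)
The plan is extremely short, since this corollary is essentially a direct readout of the preceding eigenvalue characterization. The key observation is that for a square matrix over a field (here, the splitting field $\mathbb{E}$ of $g_{n;c,d}^{[p]}, g_{s;a,b}^{[p]}, g_{m;e,f}^{[p]}$), invertibility is equivalent to the nonvanishing of the determinant, which in turn equals the product of the eigenvalues counted with multiplicity in the algebraic closure. Equivalently, an endomorphism is invertible if and only if $0$ is not one of its eigenvalues. Since $T_{RN}$ has entries in $\mathbb{Z}_p$, invertibility over $\mathbb{Z}_p$ coincides with invertibility over the extension $\mathbb{E}$.

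First I would apply Theorem \ref{thm:general-eigenvalue-set-decomposition}, which identifies the eigenvalue set of $T_{RN}$ in $\mathbb{E}$ with the Minkowski sum $\mathbf{R}_{n;c,d} + \mathbf{R}_{s;a,b} + \mathbf{R}_{m;e,f}$. Then I would note that $0$ is an eigenvalue of $T_{RN}$ if and only if $0$ lies in this Minkowski sum, i.e., there exist $\lambda \in \mathbf{R}_{n;c,d}$, $\kappa \in \mathbf{R}_{s;a,b}$, $\iota \in \mathbf{R}_{m;e,f}$ with $\lambda + \kappa + \iota = 0$. Combining with the invertibility-versus-zero-eigenvalue equivalence yields both directions of the corollary.

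There is really no obstacle here; the substantive content has already been done in Theorem \ref{thm:general-eigenvalue-set-decomposition}. The only minor point worth mentioning in the write-up is justifying the use of $\mathbb{E}$: since $g_{n;c,d}^{[p]}, g_{s;a,b}^{[p]}, g_{m;e,f}^{[p]}$ each split completely in $\mathbb{E}$, every factor of the characteristic polynomial of $T_{RN}$ splits in $\mathbb{E}$ (by the Kronecker-sum structure and Theorem \ref{thm:general-eigenvalue-set-decomposition}), so the determinant factors as a product indexed by triples $(\lambda,\kappa,\iota)$ of the form $\prod (\lambda + \kappa + \iota)$, which vanishes precisely when $0 \in \mathbf{R}_{n;c,d} + \mathbf{R}_{s;a,b} + \mathbf{R}_{m;e,f}$. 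Thus the proof can reasonably be given in two or three sentences with a pointer to Theorem \ref{thm:general-eigenvalue-set-decomposition}.
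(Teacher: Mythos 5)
Your proposal is correct and matches the paper's (implicit) argument exactly: the corollary is an immediate consequence of Theorem \ref{thm:general-eigenvalue-set-decomposition} together with the standard fact, already invoked earlier in the paper, that $T_{RN}$ is invertible over $\mathbb{Z}_p$ if and only if $0$ is not an eigenvalue over the splitting field $\mathbb{E}$. Your remark about the determinant factoring as $\prod(\lambda+\kappa+\iota)$ over triples of roots is a fine way to make the two-line justification explicit.
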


\begin{theorem}\label{thm:general-main-theorem-algorithm-for-JT-and-Inverse}
There is an algorithm for the computation of the generalized Jordan form of $T_{RN}$ and $T_{RN}^{-1}$, if it exists.
\end{theorem}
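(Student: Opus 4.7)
The plan is to adapt the proof of Theorem~\ref{thm:main-theorem-algorithm-for-JT-and-Inverse} by replacing its use of the quadratic reciprocity law with an analysis carried out over a sufficiently large extension field $\mathbb{E}$ of $\mathbb{Z}_p$. More precisely, I would take $\mathbb{E}$ to be the splitting field of the product $g_{n;c,d}^{[p]}\cdot g_{s;a,b}^{[p]}\cdot g_{m;f,e}^{[p]}$, enlarged if necessary so that $\mathbb{E}$ also contains a chosen square root of each nonzero product among $cd$, $ab$, $ef$. Inside $\mathbb{E}$ all eigenvalues lie in the field and the generalized Jordan form of $T_{RN}$ is well-defined; the resulting similarity and inverse formulas then descend to $\mathbb{Z}_p$ whenever $T_{RN}$ is itself reversible over $\mathbb{Z}_p$.

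The one genuinely new ingredient is the Jordan form of each building block $S_k(t_1,t_2)$ for $(t_1,t_2)\in\{(c,d),(a,b),(f,e)\}$. When $t_1 t_2\neq 0$, I would choose $\sigma\in\mathbb{E}$ with $\sigma^2=t_1 t_2$ and set $\beta=\sigma/t_2$; a direct computation then shows that conjugation by $P_\beta=\mathrm{diag}(\beta,\beta^2,\ldots,\beta^k)$ yields $P_\beta^{-1} S_k(t_1,t_2)P_\beta=\sigma K_k$, reducing the task to finding the Jordan form of $K_k$ over $\mathbb{E}$, which is precisely step \textsc{JFA}1 of Theorem~\ref{thm:main-theorem-algorithm-for-JT-and-Inverse}. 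In the degenerate subcases $t_1 t_2=0$, the block $S_k(t_1,t_2)$ is either a single nilpotent $k\times k$ Jordan block (when exactly one of $t_1,t_2$ is nonzero) or identically zero, and its Jordan form can be written down by inspection.

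With the three Jordan data $(U_{S_n(c,d)},J_{n;c,d})$, $(U_{S_s(a,b)},J_{s;a,b})$, $(U_{S_m(f,e)},J_{m;f,e})$ available, I would assemble $J_{T_{RN}}$ by imitating Steps~2 and~3 of the proof of Theorem~\ref{thm:main-theorem-algorithm-for-JT-and-Inverse}: the identity $M_s=I_s\otimes S_n(c,d)+S_s(a,b)\otimes I_n$ together with the compatibility of Kronecker sums with simultaneous conjugation yields a two-level block upper triangular form with diagonal blocks $B_{i,j}=J_{n;c,d}+\nu_{s,j}I_n$, after which a second application using $T_{RN}=I_m\otimes M_s+S_m(f,e)\otimes I_{ns}$ produces a three-level block upper triangular generalized Jordan form $J_{T_{RN}}$ whose diagonal entries are exactly the elements of $\mathbf{R}_{n;c,d}+\mathbf{R}_{s;a,b}+\mathbf{R}_{m;f,e}$, in agreement with Theorem~\ref{thm:general-eigenvalue-set-decomposition}. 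When all these diagonal entries are nonzero, three nested applications of Lemma~\ref{lem:Inverse-of-Jordan-form}---first to each innermost block $B_{i,j}$, then to each intermediate block $A_i$, and finally to $J_{T_{RN}}$ itself---give an explicit inverse, which upon conjugating back by $U_{T_{RN}}$ yields $T_{RN}^{-1}$.

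I expect the main obstacle to be the uniform handling of the degenerate subcases $t_1 t_2=0$, where the diagonal-scaling reduction $S_k(t_1,t_2)\sim\sigma K_k$ breaks down and $S_k(t_1,t_2)$ must be treated as an intrinsic nilpotent Jordan block in the Kronecker-sum bookkeeping; this requires a careful check that the eigenvalues arising in the second and third layers of the construction remain well-defined and that the hypotheses of Lemma~\ref{lem:Inverse-of-Jordan-form} are still met at the innermost level. Once this verification is in place, the remainder is routine: one transcribes the algorithm of Steps~\textsc{JFA}1--\textsc{JFA}3 with $k_{c,d}\lambda_{n,\ell}$ replaced by the corresponding eigenvalue of $S_n(c,d)$, and analogously for the other two directions.
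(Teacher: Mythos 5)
Your proposal is correct and follows essentially the same route as the paper: Jordanize each Toeplitz factor $S_k(t_1,t_2)$ over the splitting field, assemble the generalized Jordan form of $T_{RN}$ through the two nested Kronecker sums, and invert by repeated application of Lemma \ref{lem:Inverse-of-Jordan-form}. The only difference is that you make explicit how the Jordan data of $S_k(t_1,t_2)$ are obtained (via the diagonal-scaling reduction to $\sigma K_k$ over an extension containing a square root of $t_1t_2$, with the nilpotent case $t_1t_2=0$ treated separately), a step the paper's sketch leaves implicit by simply positing the generalized-eigenvector matrix $U_k(t_1,t_2)$.
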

\begin{proof}
The proof is similar to the discussion in the proof of Theorem \ref{thm:main-theorem-algorithm-for-JT-and-Inverse}, thus we only sketch the outline.

Given $t_1, t_2 \in \mathbb{Z}_p$ and $k \in \mathbb{N}$, let $U_{k} (t_1, t_2) \in \mathcal{M}_k(\mathbb{E})$ be the matrix consists of the generalized eigenvectors of $S_k(t_1, t_2)$; in other words,
\begin{align*}
&U_{k}^{-1} (t_1, t_2) S_k(t_1, t_2) U_{k} (t_1, t_2) \\
&= \begin{pmatrix}
\lambda_{k, 1} (t_1, t_2) & \epsilon_{k, 1}(t_1, t_2) & 0 & \cdots & 0 \\
0 & \lambda_{k, 2} (t_1, t_2) & \epsilon_{k, 2}(t_1, t_2) & \cdots & 0 \\
\vdots & \vdots & \ddots & \ddots & \vdots \\
0 & 0 & \cdots & \lambda_{k, k-1} (t_1, t_2) & \epsilon_{k, k-1}(t_1, t_2) \\
0 & 0 & \cdots & 0 & \lambda_{k, k} (t_1, t_2)
\end{pmatrix} \\ &=: J_k (t_1, t_2),
\end{align*}
where $\epsilon_{k, r}(t_1, t_2) \in \{0, 1\}$ for $1 \leq r \leq k$.

Let $U_{M_s} = U_s(a, b) \otimes U_n(c, d)$. It follows that
$$
U_{M_s}^{-1} M_s U_{M_s} = I_s \otimes J_n(c, d) + J_s(a, b) \otimes I_n =: J_{M_s}
$$
is a generalized Jordan form of $M_s$ over $\mathbb{E}$. Furthermore, let $U_{T_{RN}} = U_m(f, e) \otimes U_{M_s}$. Then
$$
U_{T_{RN}}^{-1} T_{RN} U_{T_{RN}} = I_m \otimes J_{M_s} + J_m(f, e) \otimes I_{ns} =: J_{T_{RN}}
$$
is the desired generalized Jordan form of $T_{RN}$.

Suppose that $T_{RN}$ is reversible. Let
\begin{align*}
A_i &= J_{M_s} + \lambda_{m,i}(f,e) I_{ns}, \qquad 1 \leq i \leq m, \\
B_{i, j} &= J_n(c, d) + (\lambda_{s,j}(a,b) + \lambda_{m,i}(f,e)) I_n, \qquad 1 \leq j \leq s.
\end{align*}
Then the diagonal and the superdiagonal of $J_{T_{RN}}$ are $\{A_i\}_{i=1}^m$ and $\{\epsilon_{m,r}(f,e) I_{ns}\}_{r=1}^{m-1}$, respectively, and the diagonal and the superdiagonal of $A_i$ are $\{B_{i,j}\}_{j=1}^s$ and $\{\epsilon_{s,q}(a,b) I_{ns}\}_{q=1}^{s-1}$, respectively. Repeatedly applying Lemma \ref{lem:Inverse-of-Jordan-form} reveals the formulae of $B_{i,j}^{-1}, A_i^{-1}$, and $J_{T_{RN}}^{-1}$, respectively; this completes the proof.
\end{proof}

\begin{remark}
It can be verified without difficulty that $T_{RN}$ is diagonalizable if and only if $S_n(c, d), S_s(a, b)$, and $S_m(f,e)$ are all diagonalizable, and $J_{T_{RN}}$ is a canonical Jordan form if and only if $S_s(a, b)$ and $S_m(f,e)$ are both diagonalizable. Indeed, it is seen from the proof of Theorem \ref{thm:general-main-theorem-algorithm-for-JT-and-Inverse} that $T_{RN}$ is diagonalizable if and only if $J_{M_s}$ and $J_m$ are both diagonal. Furthermore, $J_{M_s}$ is diagonal if and only if both $J_n$ and $J_s$ are both diagonal. Therefore, we conclude that $T_{RN}$ is diagonalizable if and only if $S_n(c, d), S_s(a, b)$, and $S_m(f,e)$ are all diagonalizable. The other statement can be derived analogously, thus it is omitted.
\end{remark}

\begin{remark}
In the case where $c_0 \neq 0$, we substitute $S_n(c,d)$ as $S' = S_n(c,d) + c_0 I_n$, then Theorems \ref{thm:general-eigenvalue-set-decomposition} still works provided that $\mathbf{R}_{n; c, d}$ is replaced by $\mathbf{R}_n'$, the collection of roots of $g_{n; c, d}^{[p]}(x-c_0)$. Furthermore, the algorithm for the computation of the generalized Jordan form of $T_{RN}$ (Theorem \ref{thm:general-main-theorem-algorithm-for-JT-and-Inverse}) remains to be true with a minor modification.
\end{remark}

\section{Reversibility for Multidimensional Cellular Automata} \label{sec:multidimension-ca}

This section extends the results in Sections \ref{sec:3d-ca-square-root} and \ref{sec:3d-ca-general} to multidimensional linear cellular automata with the prolonged $\eta$-nearest neighborhood for $\eta \in \mathbb{N}$. The demonstration is analogous to the discussion in the previous sections, thus it is omitted.

\subsection{Nearest Neighborhood} \label{subsec:multidimension-nesrest-nbd}

Let $n \in \mathbb{N}$, $n \geq 2$, and let $\mathbb{Z}_p^{\mathbb{Z}^n}$ be the $n$-dimensional lattice over finite field $\mathbb{Z}_p$. Suppose that $\{e_k\}_{k=1}^n$ is the standard basis of $\mathbb{R}^n$; set
$$
\mathcal{N} = \{v \in \mathbb{Z}^n: v = \lambda e_k \text{ for some } k \in \{1, \ldots, n\} \text{ and } \lambda \in \{-1, 0, 1\}\}.
$$
Fix $c, \ell_k, r_k \in \mathbb{Z}_p$ for $1 \leq k \leq n$; define $\phi: \mathbb{Z}_p^{\mathcal{N}} \to \mathbb{Z}_p$ as
$$
\phi(y_{\mathcal{N}}) = c y_{\mathbf{0}} + \sum_{k=1}^n (\ell_k y_{-e_k} + r_k y_{e_k}) \pmod{p}
$$
An $n$-dimensional linear cellular automaton $\Phi: \mathbb{Z}_p^{\mathbb{Z}^n} \to \mathbb{Z}_p^{\mathbb{Z}^n}$ with nearest neighborhood is defined as
$$
\Phi(X)_{\mathbf{i}} = \phi(X_{\mathbf{i} + \mathcal{N}}) = c X_{\mathbf{i}} + \sum_{k=1}^n (\ell_k X_{\mathbf{i} - e_k} + r_k X_{\mathbf{i} + e_k}) \pmod{p}
$$
for every $\mathbf{i} \in \mathbb{Z}^n$. Given $m_1, m_2, \ldots, m_n \in \mathbb{N}$, $m_k \geq 2$ for $1 \leq k \leq n$, a linear cellular automaton under null boundary condition is described as
$$
\Phi_N(X)_{\mathbf{i}} = c X_{\mathbf{i}} + \sum_{k=1}^n (\ell_k(\mathbf{i}) X_{\mathbf{i} - e_k} + r_k(\mathbf{i}) X_{\mathbf{i} + e_k}) \pmod{p}
$$
where
$$
\ell_k(\mathbf{i}) = \left\{
\begin{aligned}
&\ell_k, & &i_k \geq 2; \\
&0, & &i_k = 1;
\end{aligned}\right.
\quad
r_k(\mathbf{i}) = \left\{
\begin{aligned}
&r_k, & &i_k \leq m_k - 1; \\
&0, & &i_k = m_k;
\end{aligned}\right.
$$
for $1 \leq k \leq n$, and $\mathbf{i} = (i_1, i_2, \ldots, i_n)$.

First we consider the case where the parameter $c = 0$. Let $\Theta: \mathbb{Z}_p^{m_1 \times m_2 \times \cdots \times m_n} \to \mathbb{Z}_p^{m_1 m_2 \cdots m_n}$ denote the transformation that designates the state $X = (X_{\mathbf{i}})_{1 \leq i_k \leq m_k, 1 \leq k \leq n}$ as a column vector with respect to the anti-lexicographic order. Set $T_1 = S_{m_1}(\ell_1, r_1)$ and
$$
T_k = I_{m_k} \otimes T_{k-1} + S_{m_k} (\ell_k, r_k) \otimes I_{\dim T_{k-1}} \quad \text{for} \quad 2 \leq k \leq n,
$$
where $\dim A$ refers to the dimension of the square matrix $A$. The following theorem is derived immediately.

\begin{theorem} \label{thm:n-dim-matrix-representation-commute-diagram}
The linear cellular automaton $\Phi_N$ over $\mathbb{Z}_p$ under null boundary condition is completely characterized by the matrix $T_n$. More explicitly, the diagram
$$
\xymatrix{
\mathbb{Z}_p^{m_1 \times m_2 \times \cdots \times m_n} \ar[rr]^{\Phi_N} \ar[d]_{\Theta} && \mathbb{Z}_p^{m_1 \times m_2 \times \cdots \times m_n} \ar[d]^{\Theta} \\
\mathbb{Z}_p^{m_1 m_2 \cdots m_n} \ar[rr]_{\mathbf{T}} && \mathbb{Z}_p^{m_1 m_2 \cdots m_n}}
$$
commutes, where $\mathbf{T} y = T_n y \pmod{p}$ for every $y \in \mathbb{Z}_p^{m_1 m_2 \cdots m_n}$.
\end{theorem}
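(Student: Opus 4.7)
The plan is to establish the result by induction on the dimension $n$, exploiting the recursive definition $T_k = I_{m_k} \otimes T_{k-1} + S_{m_k}(\ell_k, r_k) \otimes I_{\dim T_{k-1}}$ together with the block structure that the anti-lexicographic ordering $\Theta$ imposes on a state $X \in \mathbb{Z}_p^{m_1 \times \cdots \times m_n}$.

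For the base case $n = 1$, the matrix $T_1 = S_{m_1}(\ell_1, r_1)$ is tridiagonal with subdiagonal $\ell_1$ and superdiagonal $r_1$; multiplying $T_1$ by the column vector $(X_1, X_2, \ldots, X_{m_1})'$ reproduces exactly the formula $\ell_1 X_{i-1} + r_1 X_{i+1}$ with the convention $X_0 = X_{m_1+1} = 0$ enforced by the zeros at the corners of $S_{m_1}$, which matches the null-boundary local rule defining $\Phi_N$ for $c = 0$.

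For the inductive step, assume the claim holds in dimension $n-1$ and fix an $n$-dimensional state $X$. The crucial structural observation is that, under the anti-lexicographic order, $\Theta(X)$ decomposes as the vertical concatenation of the $m_n$ column vectors $\Theta_{n-1}(X^{(i_n)})$, where $X^{(i_n)}$ denotes the $(n-1)$-dimensional slice $\{X_{i_1, \ldots, i_{n-1}, i_n}\}$ at level $i_n$. In Kronecker notation this is precisely the identification $\Theta(X) = \sum_{i_n} e_{i_n} \otimes \Theta_{n-1}(X^{(i_n)})$, where $e_{i_n}$ is the $i_n$-th standard basis vector of $\mathbb{Z}_p^{m_n}$. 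Applying $I_{m_n} \otimes T_{n-1}$ acts blockwise and, by the induction hypothesis, reproduces inside each slice $X^{(i_n)}$ exactly the contribution of the $2(n-1)$ neighbors along directions $e_1, \ldots, e_{n-1}$; applying $S_{m_n}(\ell_n, r_n) \otimes I_{\dim T_{n-1}}$ mixes blocks $i_n - 1$ and $i_n + 1$ with coefficients $\ell_n$ and $r_n$, with the corner zeros of $S_{m_n}$ implementing precisely the null-boundary cutoff $\ell_n(\mathbf{i})$ and $r_n(\mathbf{i})$ along the $n$-th direction. Summing the two contributions gives $\Theta(\Phi_N(X))$, which is the desired commutation.

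The bookkeeping step that requires the most care is verifying that the two Kronecker-product summands, when added, do not interfere and that the boundary zeros in $S_{m_k}$ correctly encode all the piecewise-defined coefficients $\ell_k(\mathbf{i}), r_k(\mathbf{i})$ simultaneously across the $n$ coordinate directions; everything else is essentially formal, paralleling the omitted verification in Theorem \ref{thm:3d-matrix-representation-commute-diagram}. Once $c_0 = c = 0$ has been handled, the extension to $c \neq 0$ amounts to replacing $T_1$ by $T_1 + c I_{m_1}$ at the base of the induction, which adds $c I$ to $T_n$ and implements the self-interaction term in the local rule; this is exactly the adjustment indicated in the remark following Theorem \ref{thm:general-main-theorem-algorithm-for-JT-and-Inverse}.
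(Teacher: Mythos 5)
Your proof is correct: the base case matches the tridiagonal structure of $S_{m_1}(\ell_1,r_1)$ against the one-dimensional null-boundary rule, and the inductive step correctly uses the fact that the anti-lexicographic order makes $\Theta(X)$ the concatenation of the $(n-1)$-dimensional slices, so that $I_{m_n}\otimes T_{n-1}$ handles the in-slice neighbors and $S_{m_n}(\ell_n,r_n)\otimes I$ handles the $\pm e_n$ neighbors with the corner zeros enforcing the boundary cutoff. The paper gives no proof at all (the theorem is declared to be ``derived immediately,'' and even the three-dimensional analogue is left as a straightforward omitted verification), so your induction is simply the natural formalization of the argument the authors chose to omit.
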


Since $\Theta$ is a one-to-one correspondence, the following statements are equivalent.
\begin{enumerate}
\item $\Phi_N$ is reversible;
\item $T_n$ is invertible over $\mathbb{Z}_p$;
\item $0$ is not an eigenvalue of $T_n$ over $\mathbb{Z}_p$.
\end{enumerate}

\begin{theorem} \label{thm:n-dim-eigenvalue-set-decomposition}
Let $\mathbf{R}_k$ denote the collection of roots of $g_{m_k; \ell_k, r_k}^{[p]}(x)$ in the splitting field $\mathbb{E}$ for \{$g_{m_k; \ell_k, r_k}^{[p]}\}_{1 \leq k \leq n}$, where $g$ is defined in \eqref{eq:charpoly-for-Sj-t1-t2}. Then the set $\mathbf{E}_{T_n}$ of eigenvalues of $T_n$ is
$$
\mathbf{E}_{T_n} = \mathbf{R}_1 + \mathbf{R}_2 + \cdots + \mathbf{R}_n,
$$
where ``$+$'' refers to the Minkowski sum.
\end{theorem}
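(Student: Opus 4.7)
The plan is to proceed by induction on the dimension $n$, exploiting the recursive definition $T_k = I_{m_k} \otimes T_{k-1} + S_{m_k}(\ell_k, r_k) \otimes I_{\dim T_{k-1}}$ together with the Kronecker-sum eigenvalue property recalled in Section \ref{sec:preliminary}. All computations take place in the splitting field $\mathbb{E}$, which, being a common splitting field for the family $\{g_{m_k;\ell_k,r_k}^{[p]}\}_{1 \leq k \leq n}$, is large enough to contain every eigenvalue that could possibly appear for $T_n$.

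For the base case $n=1$, we have $T_1 = S_{m_1}(\ell_1, r_1)$. The argument inside the proof of Theorem \ref{thm:general-eigenvalue-set-decomposition} already establishes that the characteristic polynomial of $S_{m_1}(\ell_1, r_1)$ is $g_{m_1;\ell_1,r_1}(x)$, so the set of eigenvalues of $T_1$ in $\mathbb{E}$ is exactly $\mathbf{R}_1$, as required. For the inductive step, assume $\mathbf{E}_{T_{k-1}} = \mathbf{R}_1 + \mathbf{R}_2 + \cdots + \mathbf{R}_{k-1}$. Since $T_k$ is, by definition, the Kronecker sum of $T_{k-1}$ (acting on the right) and $S_{m_k}(\ell_k, r_k)$ (acting on the left), the Kronecker-sum eigenvalue formula recalled in Section \ref{sec:preliminary} (valid over finite-field extensions by the remark made there) yields
\[
\mathbf{E}_{T_k} \;=\; \mathbf{E}_{T_{k-1}} + \mathbf{E}_{S_{m_k}(\ell_k, r_k)} \;=\; \bigl(\mathbf{R}_1 + \cdots + \mathbf{R}_{k-1}\bigr) + \mathbf{R}_k,
\]
where $\mathbf{E}_{S_{m_k}(\ell_k, r_k)} = \mathbf{R}_k$ follows from the base-case argument applied to the $k$-th factor. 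Iterating up to $k = n$ gives the claim.

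The only point requiring care, and the one I would flag as the main (modest) obstacle, is a bookkeeping issue rather than a conceptual one: one must ensure that the Kronecker-sum eigenvalue identity is invoked over a field in which both summand matrices simultaneously split, so that no eigenvalue of $T_k$ is ``missed'' at an intermediate stage. This is precisely why the statement of the theorem uses the common splitting field $\mathbb{E}$ for the entire family $\{g_{m_k;\ell_k,r_k}^{[p]}\}_{1 \leq k \leq n}$, and not merely the splitting field of $g_{m_k;\ell_k,r_k}^{[p]}$ alone at each step. With that choice of $\mathbb{E}$, every matrix $S_{m_k}(\ell_k, r_k)$ (and hence, by induction, every $T_k$) has all its eigenvalues in $\mathbb{E}$, so multiplicities and set-theoretic Minkowski sums behave as expected and the induction closes cleanly.
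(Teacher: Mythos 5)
Your proposal is correct and follows essentially the same route as the paper: the paper omits the proof of this theorem, pointing to the three-dimensional case (Theorem \ref{thm:general-eigenvalue-set-decomposition}), whose proof likewise iterates the Kronecker-sum eigenvalue formula over the splitting field and identifies the eigenvalues of each Toeplitz block $S_{m_k}(\ell_k,r_k)$ with the roots of $g_{m_k;\ell_k,r_k}$ via the recurrence and generating function. Your induction on $k$ is just an explicit formalization of that iteration, and your remark about working in a common splitting field $\mathbb{E}$ matches the paper's own setup.
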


Similar to Theorems \ref{thm:main-theorem-algorithm-for-JT-and-Inverse} and \ref{thm:general-main-theorem-algorithm-for-JT-and-Inverse}, there is an algorithm for the computation of the generalized Jordan form of $T_n$ and $T_n^{-1}$, if it exists. Rather than describing the steps of the corresponding algorithm, which is analogous to the discussion in the proof of Theorems \ref{thm:main-theorem-algorithm-for-JT-and-Inverse} and \ref{thm:general-main-theorem-algorithm-for-JT-and-Inverse}, the following theorem illustrates the formula of the desired matrix that derives the generalized Jordan form of $T_n$.

\begin{theorem} \label{thm:n-dim-U-Jordan-form-nearest-nbd}
Let $U_{m_k}(\ell_k, r_k) \in \mathcal{M}_{m_k}(\mathbb{E})$ be the matrix that transforms $S_{m_k}(\ell_k, r_k)$ to its canonical Jordan form over the splitting field $\mathbb{E}$ for $\{g_{m_k; \ell_k, r_k}^{[p]}\}_{1 \leq k \leq n}$. Define
$$
U_{T_n} = U_{m_n}(\ell_n, r_n) \otimes U_{m_{n-1}}(\ell_{n-1}, r_{n-1}) \otimes \cdots \otimes U_{m_1}(\ell_1, r_1);
$$
then $U_{T_n}$ is invertible and $U_{T_n}^{-1} T_n U_{T_n}$ is a generalized Jordan form. Furthermore, $U_{T_n}^{-1} T_n U_{T_n}$ is a canonical Jordan form if and only if $S_{m_k}(\ell_k, r_k)$ is diagonalizable over $\mathbb{E}$ for $k \geq 2$, and $T_n$ is diagonalizable over $\mathbb{E}$ if and only if $S_{m_k}(\ell_k, r_k)$ is diagonalizable over $\mathbb{E}$ for all $k$.
\end{theorem}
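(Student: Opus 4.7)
The plan is to proceed by induction on $n$, exploiting the recursive construction $T_n = I_{m_n} \otimes T_{n-1} + S_{m_n}(\ell_n, r_n) \otimes I_{\dim T_{n-1}}$ stated just before Theorem \ref{thm:n-dim-matrix-representation-commute-diagram}. The base case $n=1$ is immediate, since by definition $U_{T_1} = U_{m_1}(\ell_1, r_1)$ transforms $T_1 = S_{m_1}(\ell_1, r_1)$ into its canonical Jordan form $J_{m_1}(\ell_1, r_1)$ over $\mathbb{E}$, so all three conclusions hold trivially.

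For the inductive step I would assume the result at level $n-1$, so that $J_{T_{n-1}} := U_{T_{n-1}}^{-1} T_{n-1} U_{T_{n-1}}$ is a generalized Jordan form over $\mathbb{E}$. Since the Kronecker product of invertible matrices is invertible with $(A \otimes B)^{-1} = A^{-1} \otimes B^{-1}$, the factor $U_{T_n} = U_{m_n}(\ell_n, r_n) \otimes U_{T_{n-1}}$ is automatically invertible. Applying the mixed-product identity $(A \otimes B)(C \otimes D) = (AC) \otimes (BD)$ to each summand of the recursive formula for $T_n$ produces
$$
U_{T_n}^{-1} T_n U_{T_n} = I_{m_n} \otimes J_{T_{n-1}} + J_{m_n}(\ell_n, r_n) \otimes I_{\dim T_{n-1}}.
$$
Viewed as an $m_n \times m_n$ block matrix with blocks of size $\dim T_{n-1}$, the $(i,i)$-block is $J_{T_{n-1}} + \lambda_{m_n, i}(\ell_n, r_n)\, I$ and the $(i, i+1)$-block is $\epsilon_{m_n,i}(\ell_n, r_n)\, I$ with $\epsilon_{m_n, i} \in \{0,1\}$, matching the template \eqref{eq:semi-Jordan-form} of a generalized Jordan form.

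For the canonical-Jordan criterion I would analyze when this shape collapses to a genuine Jordan form. Adding the scalar $\lambda_{m_n,i}$ to the diagonal preserves Jordan structure, so each $(i,i)$-block is canonical Jordan iff $J_{T_{n-1}}$ is. A block $\epsilon_{m_n,i}\, I_{\dim T_{n-1}}$ with $\epsilon = 1$ places ones exactly $\dim T_{n-1}$ positions to the right of the diagonal; since every $m_k \geq 2$, this offset is at least $2$ whenever $n \geq 2$, so such a one sits strictly above the first superdiagonal and destroys canonicality. Consequently $J_{T_n}$ is canonical Jordan iff every $\epsilon_{m_n, i}$ vanishes (equivalently $S_{m_n}(\ell_n, r_n)$ is diagonalizable) \emph{and} $J_{T_{n-1}}$ is canonical Jordan; unfolding the recursion gives the stated ``diagonalizable for $k \geq 2$'' condition, with $k=1$ unconstrained because the innermost layer $J_{m_1}$ is allowed to carry genuine Jordan cells. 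The diagonalizability clause follows from the same analysis together with the additional requirement that the innermost diagonal blocks be themselves diagonal, which extends the condition to $k=1$ as well.

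The routine Kronecker algebra is not the main obstacle; the delicate point is the position bookkeeping used above, namely that a block-level $1$ sitting at matrix offset $\dim T_{n-1} \geq 2$ can never be absorbed into any legitimate canonical Jordan cell. This geometric observation is what asymmetrically excludes $k=1$ from the canonical-Jordan condition while including it in the diagonalizability condition, and it is the only place where the two parts of the theorem genuinely diverge. Once that fact is cleanly isolated from the inductive unfolding, the remainder of the argument reduces to straightforward verification using the identities for Kronecker products and sums recalled in Section \ref{sec:preliminary}.
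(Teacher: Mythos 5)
Your induction on $n$ via the mixed-product identity $(A\otimes B)(C\otimes D)=(AC)\otimes(BD)$ is exactly the paper's (omitted) argument: the paper carries out the case $n=3$ in the proof of Theorem \ref{thm:general-main-theorem-algorithm-for-JT-and-Inverse} with the same iterated conjugation $U_{T_{RN}}=U_m\otimes U_s\otimes U_n$ and declares the general case analogous, so your proposal is correct and essentially identical in approach. Your offset bookkeeping (a block $\epsilon\,I_{\dim T_{n-1}}$ with $\epsilon=1$ puts ones at distance $\dim T_{n-1}\geq 2$ from the diagonal) usefully sharpens the paper's ``can be verified without difficulty'' remark; the one step you leave at the paper's own level of assertion is the ``only if'' direction of the diagonalizability clause, which strictly requires exhibiting a rank-two generalized eigenvector of $T_n$ (e.g.\ $y_n\otimes\cdots\otimes x\otimes\cdots\otimes y_1$ with $x$ a rank-two generalized eigenvector of the non-diagonalizable factor), since similarity to a non-diagonal generalized Jordan form does not by itself rule out diagonalizability.
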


\begin{remark}
In the case where $c \neq 0$, we substitute $T_1$ as $T_1' = T_1 + c I_{\dim T_1}$, then Theorems \ref{thm:n-dim-matrix-representation-commute-diagram} and \ref{thm:n-dim-U-Jordan-form-nearest-nbd} still work; notably, $U_{m_1}(\ell_1, r_1)$ should also be substituted as $U_{m_1}'$ which transforms $T_1'$ to its canonical Jordan form. Furthermore, Theorem \ref{thm:n-dim-eigenvalue-set-decomposition} remains to be true after replacing $\mathbf{R}_1$ by $\mathbf{R}_1'$, the collection of roots of $g_{m_1; \ell_1, r_1}^{[p]}(x-c)$.
\end{remark}

\subsection{Prolonged $\eta$-Nearest Neighborhood} \label{subsec:multidimension-eta-nesrest-nbd}

This subsection extends the previous discussion to $n$-dimensional linear cellular automata with the prolonged $\eta$-nearest neighborhood for $\eta \geq 2$. Set
$$
\mathcal{N} = \{v \in \mathbb{Z}^n: v = \lambda e_k, \text{ where } 1 \leq k \leq n, -\eta \leq \lambda \leq \eta\}.
$$
Fix $c, \ell_{k,j}, r_{k,j} \in \mathbb{Z}_p$ for $1 \leq k \leq n$ and $1 \leq j \leq \eta$; define $\phi: \mathbb{Z}_p^{\mathcal{N}} \to \mathbb{Z}_p$ as
$$
\phi(y_{\mathcal{N}}) = c y_{\mathbf{0}} + \sum_{k=1}^n \sum_{\lambda=1}^{\eta} (\ell_{k,j} y_{- \lambda e_k} + r_{k,j} y_{\lambda e_k}) \pmod{p}
$$
An $n$-dimensional linear cellular automaton $\Phi: \mathbb{Z}_p^{\mathbb{Z}^n} \to \mathbb{Z}_p^{\mathbb{Z}^n}$ with the prolonged $\eta$-nearest neighborhood is defined as
$$
\Phi(X)_{\mathbf{i}} = \phi(X_{\mathbf{i} + \mathcal{N}}) = c X_{\mathbf{i}} + \sum_{k=1}^n \sum_{\lambda=1}^{\eta} (\ell_{k,j} X_{\mathbf{i} - \lambda e_k} + r_{k,j} X_{\mathbf{i} + \lambda e_k}) \pmod{p}
$$
for every $\mathbf{i} \in \mathbb{Z}^n$. Given $m_1, m_2, \ldots, m_n \in \mathbb{N}$, $m_k \geq 2$ for $1 \leq k \leq n$, a linear cellular automaton under null boundary condition is described as
$$
\Phi_N(X)_{\mathbf{i}} = c X_{\mathbf{i}} + \sum_{k=1}^n \sum_{\lambda=1}^{\eta} (\ell_{k,j}(\mathbf{i}) X_{\mathbf{i} - \lambda e_k} + r_{k,j}(\mathbf{i}) X_{\mathbf{i} + \lambda e_k}) \pmod{p}
$$
where
$$
\ell_{k,j}(\mathbf{i}) = \left\{
\begin{aligned}
&\ell_{k,j}, & &i_k \geq \eta+1; \\
&0, & &\hbox{otherwise;}
\end{aligned}\right.
\quad
r_{k,j}(\mathbf{i}) = \left\{
\begin{aligned}
&r_{k,j}, & &i_k \leq m_k - \eta; \\
&0, & &\hbox{otherwise;}
\end{aligned}\right.
$$
for $1 \leq k \leq n, 1 \leq j \leq \eta$, and $\mathbf{i} = (i_1, i_2, \ldots, i_n)$.

For $i, j \in \mathbb{N}$ with $j < i$, define
$$
S_{i}(\alpha_j, \ldots, \alpha_1, \beta_1, \ldots, \beta_j) = \begin{pmatrix}
0 & \beta_1 & \beta_2 & \cdots & \beta_j & 0 & \cdots & 0 \\ 
\alpha_1 & 0 & \beta_1 & \beta_2 & \cdots & \beta_j & \cdots & 0 \\ 
\alpha_2 & \alpha_1 & 0 & \beta_1 & \ddots & \ddots & \ddots & \vdots \\ 
\vdots & \alpha_2 & \alpha_1 & 0 & \ddots & \ddots & \ddots & \beta_j \\ 
\alpha_j & \ddots & \ddots & \ddots & \ddots & \ddots & \ddots & \vdots \\ 
0 & \ddots & \ddots & \ddots & \ddots & 0 & \beta_1 & \beta_2 \\ 
\vdots & \ddots & \ddots & \ddots & \ddots & \alpha_1 & 0 & \beta_1 \\ 
0 & \cdots & 0 & \alpha_j & \cdots & \alpha_2 & \alpha_1 & 0
\end{pmatrix}_{i \times i};
$$
notably, $S_{i}(\alpha_j, \ldots, \alpha_1, \beta_1, \ldots, \beta_j)$ is a Toeplitz matrix (\cite{Gray-2006,GC-2012}).  Set
$$
T_1 = S_{m_1}(\ell_{1,\eta}, \ldots, r_{1,\eta}) + c I_{m_1}
$$
and
$$
T_k = I_{m_k} \otimes T_{k-1} + S_{m_k} (\ell_{k,\eta}, \ldots, r_{k,\eta}) \otimes I_{\dim T_{k-1}} \quad \text{for} \quad 2 \leq k \leq n;
$$
it is seen immediately that $T_n$ is the matrix representation of $\Phi_N$. In other words, Theorem \ref{thm:n-dim-matrix-representation-commute-diagram} is extended to the $\eta$-nearest neighborhood case. The extension of Theorems \ref{thm:n-dim-eigenvalue-set-decomposition} and \ref{thm:n-dim-U-Jordan-form-nearest-nbd} can be established analogously; we skip the description for the compactness of this investigation.

\section{Conclusion and Future Work} \label{sec:conclusion}

In this paper, we investigate the reversibility problem of multidimensional linear cellular automata under null boundary conditions. It follows that the matrix representation of $n$-dimensional linear cellular automata with the prolonged $\eta$-nearest neighborhood is the Kronecker sum of $n$ smaller matrices, each of which is a Toeplitz matrix. Such a cellular automaton is reversible if and only if the Minkowski sum of the sets of eigenvalues of block Toeplitz matrices contains no zero. When the cellular automaton is reversible, we provide an algorithm for deriving its reverse rule.

The proposed method significantly reduces the computational cost when the number of cells is large or when the dimension $n$ is large. Furthermore, the dynamical behavior of a multidimensional linear cellular automaton under null boundary condition is revealed by elucidating the properties of block Toeplitz matrices.

We remark that the elucidation in this work can extend to the investigation of cellular automata under periodic boundary conditions with a minor modification. The discussion is analogous, hence it is omitted. Furthermore, Dennunzio \emph{et al.} \cite{DFW-TCS2014} characterize the properties, such as quasi-expansivity and closing property, of multidimensional cellular automata by transposing them into some specific one-dimensional systems. It is of interest how the results obtained in the present paper are reflected on its associated one-dimensional cellular automaton. The related work is under preparation.

\section*{Acknowledgment}
We would like to express our deep gratitude for the anonymous referees' valuable and constructive comments, which have significantly improved the quality and readability of this paper.

\bibliographystyle{amsplain}
\bibliography{../../grece.bib}

\providecommand{\bysame}{\leavevmode\hbox to3em{\hrulefill}\thinspace}
\providecommand{\MR}{\relax\ifhmode\unskip\space\fi MR }
\providecommand{\MRhref}[2]{%
  \href{http://www.ams.org/mathscinet-getitem?mr=#1}{#2}
}
\providecommand{\href}[2]{#2}
\begin{thebibliography}{10}

\bibitem{ALI+-CNSNS2013}
A.~A. Abdoa, S.~Lianb, I.~A. Ismailc, M.~Amina, and H.~Diaba, \emph{A
  cryptosystem based on elementary cellular automata}, Commun. Nonlinear Sci.
  Numer. Simul. \textbf{18} (2013), 136--147.

\bibitem{AP-JCSS1972}
S.~Amoroso and Y.~N. Patt, \emph{Decision procedures for surjectivity and
  injectivity of parallelmaps for tessellation structures}, J. Comput. System
  Sci. \textbf{6} (1972), 448--464.

\bibitem{Bennett-IJRD1973}
C.~H. Bennett, \emph{Logical reversibility of computation}, IBM J Res. Develop.
  \textbf{17} (1973), 525--532.

\bibitem{CMC+-ITIP2011}
L.~Cappellari, S.~Milani, C.~{Cruz-Reyes}, and G.~Calvagno, \emph{Resolution
  scalable image coding with reversible cellular automata}, IEEE Trans. Image
  Process. \textbf{20} (2011), 1461--1468.

\bibitem{CC-IS2016}
C.-H. Chang and H.~Chang, \emph{On the {Bernoulli} automorphism of reversible
  linear cellular automata}, Inform. Sci. \textbf{345} (2016), 217--225.

\bibitem{CS-2016}
C.-H. Chang and J.-Y. Su, \emph{Reversibility of linear cellular automata on
  cayley trees with periodic boundary condition}, arXiv:1603.01679, 2016.

\bibitem{CAS-JSP2011}
Z.~Cinkir, H.~Ak{\i}n, and I.~Siap, \emph{Reversibility of {1D} cellular
  automata with periodic boundary over finite fields $\mathbb{Z}_p$}, J. Stat.
  Phys. \textbf{143} (2011), 807--823.

\bibitem{RS-AMC2011}
A.~M. del Rey and G.~R. S\'{a}nchez, \emph{Reversibility of linear cellular
  automata}, Appl. Math. Comput. \textbf{217} (2011), 8360–8366.

\bibitem{DFW-TCS2014}
A.~Dennunzio, E.~Formenti, and M.~Weiss, \emph{Multidimensional cellular
  automata: closing property, quasi-expansivity, and (un)decidability issues},
  Theor. Comput. Sci. \textbf{516} (2014), 40--59.

\bibitem{ER-AMC2007a}
L.~H. Encinas and A.~M. del Rey, \emph{Inverse rules of {ECA} with rule number
  150}, Appl. Math. Comput. \textbf{189} (2007), 1782--1786.

\bibitem{Gray-2006}
R.~M. Gray, \emph{Toeplitz and circulant matrices: A review}, Foundations and
  Trends in Communications and Information, Now Publishers Inc, 2006.

\bibitem{GC-2012}
J.~Gutierrez-Gutierrez and P.~M. Crespo, \emph{Block toeplitz matrices:
  Asymptotic results and applications}, Foundations and Trends in
  Communications and Information, Now Publishers Inc, 2012.

\bibitem{HT-ITNN2011}
T.~Hishiki and H.~Torikai, \emph{A novel rotate-and-fire digital spiking neuron
  and its neuron-like bifurcations and responses}, IEEE Trans. Neural Netw.
  \textbf{22} (2011), 752--767.

\bibitem{HJ-1994}
R.~A. Horn and C.~R. Johnson, \emph{Topics in matrix analysis}, Cambridge
  University Press, 1994.

\bibitem{ION-JCSS1983}
M.~Ito, N.~Osato, and M.~Nasu, \emph{Linear cellular automata over
  $\mathbb{Z}_m$}, J. Comput. System Sci. \textbf{27} (1983), 125--140.

\bibitem{Kari-PD1990}
J.~Kari, \emph{Reversibility of {2D} cellular automata is undecidable}, Physica
  D \textbf{45} (1990), 386--395.

\bibitem{Kari-JCSS1994}
\bysame, \emph{Reversibility and surjectivity problems of cellular automata},
  J. Comput. System Sci. \textbf{48} (1994), 149--182.

\bibitem{Kari-TCS2005}
\bysame, \emph{Theory of cellular automata: {A} survey}, Theoret. Comput. Sci.
  \textbf{334} (2005), 3--33.

\bibitem{KHP-IATCBB2012}
N.~Kazmi, M.~A. Hossain, and R.~M. Phillips, \emph{A hybrid cellular automaton
  model of solid tumor growth and bioreductive drug transport}, IEEE ACM Trans.
  Comput. Biol. Bioinfo. \textbf{9} (2012), 1595--1606.

\bibitem{KBT+-JID2013}
S.~Kippenberger, A.~Bernd, D.~Tha\c{c}i, R.~Kaufmann, and M.~Meissner,
  \emph{Modeling pattern formation in skin diseases by a cellular automaton},
  J. Invest. Dermatol. \textbf{133} (2013), 567--571.

\bibitem{KSA-TJM2016}
M.~E. K\"{o}ro\u{g}lu, I.~Siap, and H.~Ak{\i}n, \emph{The reversibility problem
  for a family of two-dimensional cellular automata}, Turkish J. Math.
  \textbf{40} (2016), 665--678.

\bibitem{Landauer-IJRD1961}
R.~Landauer, \emph{Irreversibility and heat generation in the computing
  process}, IBM J Res. Develop. \textbf{5} (1961), 183--191.

\bibitem{MM-JCSS1998}
G.~Manzini and L.~Margara, \emph{Invertible linear cellular automata over
  $\mathbb{Z}_m$: {Algorithmic} and dynamical aspects}, J. Comput. System Sci.
  \textbf{56} (1998), 60--97.

\bibitem{Morit-2012}
K.~Morita, \emph{Reversible cellular automata}, Handbook of Natural Computing,
  Springer-Verlag Berlin Heidelberg, 2012, pp.~231--257.

\bibitem{MH-IT1989}
K.~Morita and M.~Harao, \emph{Computation universality of 1 dimensional
  reversible (injective) cellular automata}, IEICE Trans. \textbf{E72} (1989),
  758--762.

\bibitem{MM-2007}
G.~L. Mullen and C.~Mummert, \emph{Finite fields and applications}, American
  Mathematical Society, 2007.

\bibitem{Nasu-TAMS2002}
M.~Nasu, \emph{The dynamics of expansive invertible onesided cellular
  automata}, Trans. Am. Math. Soc. \textbf{354} (2002), 4067--4084.

\bibitem{NY-JPAG2004}
A.~Nobe and F.~Yura, \emph{On reversibility of cellular automata with periodic
  boundary conditions}, J. Phys. A-Math. General \textbf{37} (2004),
  5789--5804.

\bibitem{Orteg-1987}
J.~M. Ortega, \emph{Matrix theory}, Plenum Press, 1987.

\bibitem{Rom-2006}
S.~Roman, \emph{Field theory}, 2 ed., Springer, 2006.

\bibitem{SUA+-AMM2015}
U.~Sahin, S.~Ugus, H.~Ak{\i}n, and I.~Siap, \emph{Three-state von {Neumann}
  cellular automata and pattern generation}, Appl. Math. Model. \textbf{39}
  (2015), 2003--2024.

\bibitem{TMA+-IS2012}
J.~C. {Seck-Tuoh-Mora}, G.~J. Mart\'{i}nez, R.~Alonso-Sanz, and
  N.~Hern\'{a}ndez-Romero, \emph{Invertible behavior in elementary cellular
  automata with memory}, Inform. Sci. \textbf{199} (2012), 125 -- 132.

\bibitem{Toffoli-JCSS1977}
T.~Toffoli, \emph{Computation and construction universality of reversible
  cellular automata}, J. Comput. Syst. Sci. \textbf{15} (1977), 213--231.

\bibitem{VBT-IJSAC2011}
W.~Viriyasitavat, F.~Bai, and O.~K. Tonguz, \emph{Dynamics of network
  connectivity in urban vehicular networks}, IEEE J. Sel. Area Commun.
  \textbf{29} (2011), 515--533.

\bibitem{Yamagishi-FFA2013}
M.~Yamagishi, \emph{Elliptic curves over finite fields and reversibility of
  additive cellular automata on square grids}, Finite Fields Appl. \textbf{19}
  (2013), 105 -- 119.

\bibitem{YWX-IS2015}
B.~Yang, C.~Wang, and A.~Xiang, \emph{Reversibility of general 1d linear
  cellular automata over the binary field $\mathbb{Z}_2$ under null boundary
  conditions}, Inform. Sci. \textbf{324} (2015), 23--31.

\end{thebibliography}

\end{document}